\newtheorem{theorem}{Theorem}[section]
\newtheorem*{thm}{Theorem}
\newtheorem*{theorem1}{Theorem 1}
\newtheorem*{theorem2}{Theorem 2}
\newtheorem{lemma}[theorem]{Lemma}
\newtheorem{proposition}[theorem]{Proposition}
\theoremstyle{definition}
\newtheorem{definition}[theorem]{Definition}
\theoremstyle{remark}
\newtheorem{remark}[theorem]{Remark}
\numberwithin{equation}{section}
\newcommand{\R}{\mathbb{R}}
\newcommand{\e}{e_\alpha}
\newcommand{\z}{\zeta}
\newcommand{\be}{\begin{equation}}
\newcommand{\ee}{\end{equation}}
\newcommand{\bd}{\begin{displaymath}}
\newcommand{\ed}{\end{displaymath}}
\newcommand{\del}{\delta}
\renewcommand{\b }{\beta }
\renewcommand{\d}{\delta }
\renewcommand{\e}{\varepsilon}
\renewcommand{\b }{\beta }
\renewcommand{\d}{\delta }
\newcommand{\D }{\Delta }
\renewcommand{\l }{\lambda }
\begin{document}
\date{}
\title[Rigidity of stable marginally outer trapped surfaces in initial data sets]
{Rigidity of stable marginally outer trapped surfaces in initial data sets}

\author{Alessandro Carlotto}
\address{ETH - Institute for Theoretical Studies  \\
	ETH \\
	Z\"urich, Switzerland}
\email{alessandro.carlotto@eth-its.ethz.ch}

\thanks{The author was partially supported by Stanford University and NSF grant DMS-1105323}

\begin{abstract} In this article we investigate the restrictions imposed by the dominant energy condition (DEC) on the topology and conformal type of \textsl{possibly non-compact} marginally outer trapped surfaces (thus extending Hawking's classical theorem on the topology of black holes).
We first prove that an unbounded, stable marginally outer trapped surface in an initial data set $(M,g,k)$ obeying the dominant energy condition is conformally diffeomorphic to either the plane $\mathbb{C}$ or to the cylinder $\mathbb{A}$ and in the latter case infinitesimal rigidity holds. As a corollary, when the DEC holds strictly this rules out the existence of trapped regions with cylindrical boundary. 
In the second part of the article, we restrict our attention to asymptotically flat data $(M,g,k)$ and show that, in that setting, the existence of an unbounded, stable marginally outer trapped surface essentially never occurs unless in a very specific case, since it would force an isometric embedding of $(M,g,k)$ into the Minkowski spacetime as a space-like slice.
\end{abstract}

\maketitle
\section{Introduction}

In General Relativity, the existence of a closed trapped surface in a space-like slice $(M,g,k)$ of a spacetime $(\mathbb{L},\gamma)$ is (under certain natural energy and causal conditions) symptomatic of the geodesic incompletenss of the spacetime in question. In physical terms, that spacetime must contain a black hole. However, when considering \textsl{marginally} outer trapped surfaces the a priori restriction to closed submanifolds is no longer completely justified, at least for very general classes of data. Indeed, when one considers MOTS as separating elements, namely as boundaries of a trapped region it seems conceivable that complete, unbounded MOTS may arise. To be more specific, let us recall here that given an initial data set $(M,g,k)$ the outer trapped region is the union of all domains bounded by weakly outer trapped surfaces and the, possibly empty, interior boundary of $M$: it was recently proven by Andersson and Metzger (Theorem 1.3 in \cite{AM09}) that the boundary of the trapped region is a smooth, embedded, outermost MOTS (in fact the only one). In their work, they assumed to deal with a \textsl{compact} ambient manifold with two closed boundary components $\partial^{+}M$ and $\partial^{-}M$ so that the boundary of the trapped region had itself to be closed, hence only consiting of spherical and (possibly exceptional) toroidal components by Hawking's theorem on the topology of black holes (see \cite{Haw72} and \cite{Gal08} for the associated rigidity phenomena). But when the existence assumption on $\partial^{+}M$ is dropped and $M$ is not compact, then it is a priori possible to deal with unbounded trapped regions for which the \textsl{conclusion} of the theorem by Andersson and Metzger still holds. Hence it becomes relevant to extend the aforementioned topological results to the case of \textsl{complete} (unbounded) stable MOTS. Incidentally, we observe here that while the notion of \textsl{outward} for a non-compact orientable surface is arbitrary, this is not the case for MOTS that arise as boundaries of the trapped region. 

A second very good reason to pursue our study is given by the recent work by Eichmair on the Plateau problem for marginally outer trapped surfaces \cite{Eic09}, as complete MOTS naturally arise as limits of MOTS with boundary (because of their curvature and area estimates, the latter being related to their $\lambda$-minimizing properties).  

We first prove that in general initial data sets the sole assumption of dominant energy condition forces severe restrictions on the conformal class of a complete, stable MOTS: in fact when $\mu>|J|_{g}$ strictly that rules out the existence of all types of MOTS but spherical and planar ones, namely those equivalent to $\mathbb{C}$.
  
\begin{theorem1}\label{thm:structconf}
Let $(M,g,k)$ be an initial data set of dimension three satisfying the dominant energy condition and let $\Sigma$ be a complete, two-sided \textsl{stable} MOTS in $M$. Then the following statements hold:
\begin{enumerate}
\item{If $\Sigma$ is compact, then it is conformally equivalent to the sphere $\mathbb{S}^{2}$ or to the torus $\mathbb{T}^{2}$. Moreover, in the latter case $\Sigma$ is flat, totally geodesic and can be embedded in a smooth local foliation $\left\{\Sigma_{t}\right\}_{t\in\left(-\e,\e\right)}$ where each leaf is itself a MOTS. As a result, if $\Sigma$ is outermost then it is conformally equivalent to $\mathbb{S}^{2}$.}
\item{If $\Sigma$ is not compact, then it is conformally equivalent to the complex plane $\mathbb{C}$ or to the cylinder $\mathbb{A}$. Moreover, in the latter case $\Sigma$ is infinitesimally rigid namely
\[
 K=\chi=\mu+J(\nu)=0 \ \textrm{identically on} \ \Sigma.
\]
 If the \textsl{strict} dominant energy condition holds, then only the first alternative can happen (and thus $\Sigma$ is conformally equivalent to $\mathbb{C}$).}
\end{enumerate} 
\end{theorem1}

The statements collected in (1) are well-known and date back to Hawking(in \cite{Haw72}, see also \cite{GS06} for the higher dimensional counterpart), and to Galloway \cite{Gal08} for the part concerning the construction of the foliation $\left\{\Sigma_{t}\right\}_{t\in\left(-\e,\e\right)}$: they have been stated here for completeness. Various comments related to Theorem \ref{thm:structconf} are appropriate.
First of all, the proof of the rigidity statement given in part (2) would be relatively simple if one made the assumption that the MOTS $\Sigma$ has finite total curvature (in which case one could easily adapt the argument given, for minimal surfaces, by Fischer-Colbrie and Schoen in \cite{FS80}) or if, alternatively, one assumed the sectional curvature of $(M,g)$ to be bounded (in which case one could combine the argument given by Schoen and Yau in \cite{SY82} with a preliminary deformation by means of Shi's complete Ricci flow). Instead, we do not make any such assumption here and thus the proof of Theorem \ref{thm:structconf} requires a combination of various ideas. We also emphasize that in part (2) of this theorem, the surface $\Sigma$ is not required to have quadratic area growth (nor to be embedded) and in this generality a rigorous argument is more subtle than it may look.
 
We later specify our study to asymptotically flat data and show that, in a wide setting, unbounded stable MOTS do not exist at all unless the spacetime in question is Minkowskian.   

\begin{theorem2}\label{thm:MOTSrig3}
Let $(M,g,k)$ be an initial data set of dimension three in boosted harmonic asymptotics. If it contains a complete, properly embdedded\footnote{Of course, in saying this, it is implicitly assumed that the surface in question is non-compact.} two-sided stable MOTS $\Sigma$, then $(M,g,k)$ isometrically embeds in the Minkowski spacetime as a space-like slice.
\end{theorem2}

We refer the reader to Section \ref{sec:def} for the precise definition of the class of data mentioned here. In this introduction, we shall limit ourselves to say that this includes as special cases: 
\begin{itemize}
\item{data in harmonic asymptotics (as defined in \cite{EHLS11}), which were proven to be a dense class in general asymptotically flat initial data sets with respect to the topology of weighted Sobolev spaces (see Section 6 of \cite{EHLS11});} 
\item{the $t=constant$ slices, in isotropic coordinates, of the Kerr-Newman spacetime (thus including, as special cases, Schwarzschild, Kerr and Reissner-Nordstr\"om data)  as well as boosts thereof}.
\end{itemize}

It is certainly appropriate to mention here the article \cite{CS14}, which is joint work with R. Schoen, where we show that the rigidity Theorem 2 is essentially sharp by constructing asymptotically flat initial data sets that have large ADM energy and momentum and are \textsl{exactly trivial} outside of a solid cone (of given, yet arbitrarily small opening angle) so that they contain plenty of complete, stable MOTS of planar type. A posteriori, this strongly justifies our requirement that the metric $g$ in the previous statement has some nice asymptotics at infinity. Moreover, such \textsl{flexibility} result turns out to allow the construction of new classes of $N$-body solutions of the Einstein constraint equations. 

For time-symmetric data, namely when $k=0$, marginally outer trapped surfaces are nothing but \textsl{minimal} surfaces and hence Theorem 2 generalizes the following result, of independent interest.

\begin{thm}\cite{Car13}
Let $(M,g)$ be an asymptotically Schwarzschildean 3-manifold of non-negative scalar curvature. If it contains a complete, properly embedded two-sided stable minimal surface $\Sigma$, then $\left(M,g\right)$ is isometric to the Euclidean space $\R^{3}$ and $\Sigma$ is an affine plane. 
\end{thm}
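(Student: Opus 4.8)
The plan is to argue directly, using the time-symmetric case of Theorem~\ref{thm:structconf} as the starting point (this statement is also, a posteriori, the $k=0$ instance of Theorem~\ref{thm:MOTSrig3}). Since $k=0$, a MOTS is a minimal surface, a stable MOTS is a stable minimal surface, the dominant energy condition reads $R_g\geq 0$, and $\mu+J(\nu)=\tfrac12 R_g$; thus part~(2) of Theorem~\ref{thm:structconf} tells us that $\Sigma$ is conformally $\mathbb{C}$ or the cylinder $\mathbb{A}$, hence parabolic and, being orientable inside the orientable $M$, two-sided. I would first reduce matters to showing that in both cases $\Sigma$ is \emph{totally geodesic with $R_g\equiv 0$ along it}. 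For the cylindrical case this is exactly the infinitesimal rigidity asserted by Theorem~\ref{thm:structconf}(2). For the planar case the asymptotics of $(M,g)$ enter for the first time: since $\Sigma$ is properly embedded and $g$ is asymptotic to Schwarzschild, an analysis of the minimal surface equation on the (annular) end of $\Sigma$ shows that the end has quadratic area growth and is asymptotic to an affine plane, so that $\Sigma$ has finite total curvature with $\int_\Sigma K=0$; rewriting $|A|^2+\mathrm{Ric}(\nu,\nu)=\tfrac12|A|^2+\tfrac12 R_g-K$ by the Gauss equation and testing the stability inequality against logarithmic cut-offs (admissible by parabolicity and finiteness of the total curvature) then gives $\int_\Sigma\bigl(\tfrac12|A|^2+\tfrac12 R_g\bigr)\leq\int_\Sigma K=0$, whence $A\equiv 0$ and $R_g\equiv 0$ on $\Sigma$, as desired.

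The second step would be to propagate this rigidity to a neighbourhood of $\Sigma$ and extract a local product structure. The Jacobi operator of $\Sigma$ is $L_\Sigma=-\Delta-(|A|^2+\mathrm{Ric}(\nu,\nu))=-\Delta+K$, it is non-negative with bottom of the spectrum $0$ attained by a positive solution; running the deformation argument of Galloway---here solving the linearised problem in function spaces weighted to the planar asymptotics of $\Sigma$, where $-\Delta+K$ is invertible---I would obtain a smooth one-parameter family $\{\Sigma_t\}_{t\in(-\e,\e)}$ of complete, properly embedded minimal surfaces with $\Sigma_0=\Sigma$ and positive lapse. As each leaf is minimal, its lapse is a positive Jacobi field, so $\Sigma_t$ is again stable with bottom eigenvalue $0$, and the first step applied leaf by leaf shows that every $\Sigma_t$ is totally geodesic with $R_g|_{\Sigma_t}\equiv 0$. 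Vanishing of the second fundamental forms forces the induced metrics to be $t$-independent, so the metric reads $\rho^2\,dt^2+g_0$ on $(-\e,\e)\times\Sigma$; normalising the foliation so that the lapse is $t$-independent makes this static, and then $R_g\equiv 0$ becomes the static vacuum system, so $\rho$ is a positive harmonic function on the parabolic surface $(\Sigma,g_0)$ and hence constant, $g_0$ is Ricci-flat and therefore flat, and $g=dt^2+g_0$ is flat near $\Sigma$.

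It remains to globalise, and this is the step I expect to be the main obstacle. Let $I\subseteq\mathbb{R}$ be the maximal interval over which the foliation extends with all the above properties; the local analysis makes $I$ open, and on $\bigcup_{t\in I}\Sigma_t$ the metric is the flat product $dt^2+g_0$, so no leaf can accumulate at a finite interior point of $M$ without the foliation continuing past it. What must be excluded is that, as $t$ approaches an endpoint of $I$, the foliation degenerates or a leaf runs off to infinity in finite parameter time; this is precisely what the Schwarzschildean decay controls, since near infinity the $\Sigma_t$ are small graphs over affine planes parallel to the asymptotic plane of $\Sigma$, in the same way that parallel planes foliate $\mathbb{R}^3\setminus\overline{B}$. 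Granting this, $I=\mathbb{R}$ and the resulting map $\Phi\colon(\mathbb{R}\times\Sigma,\,dt^2+g_0)\to(M,g)$ is a local isometry from a complete manifold, hence a Riemannian covering. The assumed asymptotics then pin down the topology: $M$ has a single end diffeomorphic to $\mathbb{R}^3\setminus\overline{B}$, in particular simply connected at infinity, which excludes $\Sigma\cong\mathbb{A}$ (whose product end $(\mathbb{R}^2\times S^1)\setminus K$ has non-trivial fundamental group) and forces $\Phi$ to be one-sheeted, hence a diffeomorphism. Therefore $\Sigma$ is a complete flat surface diffeomorphic to the plane, i.e. isometric to $(\mathbb{R}^2,\delta)$, whence $(M,g)=(\mathbb{R}^3,\delta)$ and $\Sigma$ is an affine plane.
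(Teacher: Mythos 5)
Your first step is sound, and it is essentially the first half of what one extracts from the proof of Theorem 2 at $k=0$: finite total curvature and quadratic area growth, planar asymptotics of the end, logarithmic cut-offs in the stability inequality combined with Gauss--Bonnet, hence $A\equiv 0$ and $R_g\equiv 0$ along $\Sigma$. From that point on, however, you diverge from the paper, and the divergence opens genuine gaps. The paper does \emph{not} attempt to build a minimal foliation around $\Sigma$: it observes that the Fischer-Colbrie--Schoen argument further forces $K\equiv 0$, writes the induced metric of the graphical end in the given asymptotic coordinates, computes the leading term of $R_\Sigma$ explicitly in terms of the mass coefficient $\mathcal{K}$, and shows that $R_\Sigma\equiv 0$ along two coordinate lines is incompatible with $\mathcal{K}\neq 0$; then $\mathcal{E}=0$ and the rigidity statement of Theorem \ref{thm:PMT} finishes the proof. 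Your route tries to bypass the positive mass theorem by a local splitting plus globalization, which is a much stronger undertaking.

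The decisive gap is the foliation step. Galloway's deformation argument lives on a \emph{compact} surface, where the Jacobi operator with principal eigenvalue $0$ and kernel spanned by a positive eigenfunction can be inverted transversally to the kernel and the resulting constant-mean-curvature leaves analyzed by an ODE in $t$. On a complete surface asymptotic to a plane, $-\Delta+K$ is \emph{not} invertible between weighted spaces adapted to the planar asymptotics: already for the flat Laplacian on $\mathbb{R}^2$ the solution of $\Delta u=f$ with decaying $f$ generically grows like $\log|x'|$ (precisely the source of the $a_{-1}\log|x'|$ term in Proposition \ref{pro:expinf}), constants lie in the kernel on the natural spaces, and the bottom of the spectrum is $0$; so the implicit function theorem as you invoke it does not apply, and even if one produced nearby constant-mean-curvature leaves one would still need to show they are minimal, which in the non-compact setting requires controlling boundary terms at infinity. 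Two further points would need repair even granting the foliation: first, $R_g\equiv 0$ on a product region $\rho^2\,dt^2+g_0$ gives only $\Delta_{g_0}\rho=K_0\,\rho$, not the static vacuum system, so ``$\rho$ harmonic and $g_0$ flat'' does not follow as stated; second, the globalization to $I=\mathbb{R}$ is exactly the part you concede (``granting this''), and stability alone does not prevent the leaves from degenerating in finite parameter time. As it stands the proposal is a programme rather than a proof; the paper's detour through the asymptotic expansion of $R_\Sigma$ and the positive mass theorem is what makes the argument close.
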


Despite the formal analogy, the proof of Theorem 2 significantly differs from that of its time-symmetric counterpart. Basically, the non-variational nature of MOTS does not allow a direct application of the results concerning isolated singularities of variational problems (see the monograph by L. Simon \cite{Sim85} and reference therein) and thus a more \textsl{ad hoc} argument is needed. As a result, the proof in question, though quite lengthy, has two remarkable advantages: firstly, it is self-contained, and secondly it highlights the key role of \textsl{stability} over the inessential variational structures themselves. Thus, the reader shall find here a substantially different proof of the rigidity result in \cite{Car13}, at least when the ambient dimension equals three.

Lastly, we mention that the latter rigidity result (Theorem 2) can also be interpreted in terms of restrictions on the blow-up set of Jang's equation, based on arguments which go back to the proof of the \textsl{Positive Energy Theorem} by Schoen-Yau \cite{SY81} and we refer the reader to the beautiful survey by Andersson, Eichmair and Metzger \cite{AEM11} for further details on this correspondence.

\

\textsl{Acknowledgments}. The author wishes to express his deepest gratitude to his PhD advisor Richard Schoen for his outstanding guidance, and for the great example he has set. He would also like to thank Otis Chodosh, Michael Eichmair and Greg Galloway for several useful discussions and for pointing out some relevant references, as well as the referee for some relevant and helpful remarks about the first version of the present article.

\section{Definitions and notations}\label{sec:def}

We need to start by recalling a few basic definitions. 

\subsection{Initial data sets}

\begin{definition}\label{def:IDS}
Given an integer $n\geq 3$, we define an \textsl{initial data set} to be a triple $(M,g,k)$ where:
\begin{itemize}
\item{$M$ is a complete $\mathcal{C}^{3}$ manifold of dimension $n$}
\item{$g$ is a $\mathcal{C}^{2}$ Riemannian metric on $M$;}
\item{$k$ is a $\mathcal{C}^{1}$ symmetric $(0,2)$ tensor on $M$.}
\end{itemize}
For an initial data set, we define the \textsl{mass density} $\mu$ and the current density $J$ by means of the equations
\[\begin{cases}
\mu=\frac{1}{2}\left(R_{g}+\left(\textrm{tr}_{g}k\right)^{2}-\left\|k\right\|^{2}_{g}\right) \\
J=\textrm{div}_{g}\left(k-\left(\textrm{tr}_{g}k\right)g\right).
\end{cases}
\]
We say that $(M,g,k)$ satisfies the \textsl{dominant energy condition} (which we often abbreviate as DEC) if at any point of $M$
\[ \mu\geq |J|_{g}.
\]
\end{definition}

When $(M,g,k)$ arises as a space-like slice inside a spacetime $(\mathbb{L},\gamma)$ the densities $\mu$ and $J$ are defined as certain components of the stress-energy tensor $T$ and thus the equations above should be considered as restrictions deriving from the Gauss and Codazzi equations in $(\mathbb{L},\gamma)$, known as \textsl{Einstein constraint equations}.

We now restrict our attention to a special subclass of initial data sets, which are the object of study of the second part of this article.

\begin{definition}\label{def:AFIDS}
Given an integer $n\geq 3$, an initial data set $(M,g,k)$ is called asymptotically flat if:
\begin{enumerate}
\item{there exists a compact set $Z\subset M$ (the \textsl{interior} of the manifold) such that $M\setminus Z$ consists of a disjoint union of \textsl{finitely many} ends, namely $M\setminus Z=\bigsqcup_{l=1}^{N}E_{l}$ and for each index $l$ there exists a smooth diffeomorphism $\Phi_{l}:E_{l}\to\R^{n}\setminus B_{l}$ for some open ball $B_{l}\subset \R^{n}$ containing the origin so that the pulled-back metric $\left(\Phi_{l}^{-1}\right)^{\ast}g$ and the pushed-forward tensor  $\left(\Phi_{l}\right)_{\ast}k$ satisfy the following conditions:
\begin{equation*}
\begin{cases}
\left(\left(\Phi_{l}^{-1}\right)^{\ast}g\right)_{ij}-\delta_{ij}=p_{ij}, \ \ p_{ij}(x)\in O_{2}(|x|^{-(n-2)})\\
\left(\left(\Phi_{l}\right)_{\ast}k\right)^{ij}=O_{1}(|x|^{-(n-1)}) 
\end{cases}
 \ \ \textrm{as} \ |x|\to\infty
\end{equation*}
}
\item{both the \textsl{mass density} $\mu$ and the \textsl{current density} are integrable, namely
\[(\mu, J)\ \in \ \mathcal{L}^{1}(M).
\]
}
\end{enumerate}
In the time-symmetric case, namely when $k=0$, we will simply refer to $(M,g)$ as an \textsl{asymptotically flat manifold}.
\end{definition}

\subsection{Boosted harmonic asymptotics}

For our purposes, it is appropriate to enlarge the class of data under consideration from those in \textsl{harmonic asymptotics} (see \cite{EHLS11}) to its \textsl{closure} under the operation of \textsl{relativistic boost} (inside a given spacetime), namely when a trasformation of the form
\bd
\begin{cases}
\left(x^{0}\right)'=\left(1-\beta^{2}\right)^{-1/2}\left(x^{0}-\beta x^{1}\right)  \\
\left(x^{1}\right)'=\left(1-\beta^{2}\right)^{-1/2}\left(x^{1}-\beta x^{0}\right)  \\
\left(x^{2}\right)'=x^{2} \\
\left(x^{3}\right)'=x^{3}
\end{cases}
\ed
is performed and the resulting $(x^{0})'=0$ space-like slice is considered. Notice that here $0\leq\b<1$ is the \textsl{speed} describing the boost (in normalized unit, with $c=1$).

\begin{definition}\label{def:BDS}
We say that an initial data set $(M,g,k)$ (see Definition \ref{def:AFIDS}) is in \textsl{boosted harmonic asymptotics} if the metric $g$ has the form
\bd
g(x)=\sum_{l=1}^{n}\left(1+\frac{\mathcal{K}\beta_{l}^{2}}{\left|x\right|^{n-2}_{\ast}}\right)dx^{l}\otimes dx^{l}+O_{2}\left(|x|_{\ast}^{-\left(n-1\right)}\right)
\ed
where we have set
\bd
\left|x\right|^{2}_{\ast}=\sum_{l=1}^{n}\z_{l}^{2}\left(x^{l}\right)^{2}
\ed
for some fixed positive real numbers $\b_{1},\ldots, \b_{n}, \z_{1}, \ldots, \z_{n}$ and non-negative $\mathcal{K}$.
\end{definition}

Of course, in the previous definition (for a given such metric $g$) the constant $\mathcal{K}$ is only determined up to a \textsl{positive} scaling factor but since we are only concerned about it being (or not being) equal to zero this turns out to be a convenient choice for our treatment. A motivation for the introduction of the class above is given by the following basic example.

\begin{remark}
The Schwarzschild spacetime is described in the so-called \textsl{isotropic coordinates} (due to Eddington) by 
\[
\gamma=-f(x)dt\otimes dt +\left(1+\frac{\mathcal{M}}{2\left|x\right|}\right)^{4}\del, \ \ f(x)=\left(\frac{1-\frac{\mathcal{M}}{2\left|x\right|}}{1+\frac{\mathcal{M}}{2\left|x\right|}}\right)^{2}
\]
(where $\d$ denotes here the Euclidean metric on $\R^{3}$ (in fact, on $\R^{3}\setminus\left\{\left|x\right|\leq r_{S}\right\}$ for $r_{S}=\mathcal{M}/2$))
and therefore, by restricting to the hypersurface $t=\b x^{1}$ (for some $0\leq\b<1$) we get the space-like metric
\bd
g=\left[\left(1+\frac{\mathcal{M}}{2\left|x\right|}\right)^{4}-\b^{2}f(x)\right]dx^{1}\otimes dx^{1}+\left(1+\frac{\mathcal{M}}{2\left|x\right|}\right)^{4}\sum_{i=2,3}dx^{i}\otimes dx^{i}
\ed
which can be Taylor-expanded as
\bd
g=\left[\left(1-\b^{2}\right)+\left(1+\b^{2}\right)\left(\frac{2\mathcal{M}}{\left|x\right|}\right)\right]dx^{1}\otimes dx^{1}+\left(1+\frac{2\mathcal{M}}{\left|x\right|}\right)\sum_{i=2,3}dx^{i}\otimes dx^{i}+O_{2}\left(\left|x\right|^{-2}\right).
\ed
Therefore, replacing the coordinates $\left\{x\right\}$ by means of \textsl{asymptotically flat} coordinates $\left\{X\right\}$ 
\begin{equation*}
\begin{cases}
X^{1}=\left(1-\b^{2}\right)^{1/2}x^{1} \\
X^{2}=x^{2} \\
X^{3}=x^{3} 
\end{cases}
\end{equation*}
we finally get 
\[
g=\left[1+\left(\frac{1+\b^{2}}{1-\b^{2}}\right)\left(\frac{2\mathcal{M}}{\left|X\right|_{\ast}}\right)\right]dX^{1}\otimes dX^{1}+\left(1+\frac{2\mathcal{M}}{\left|X\right|_{\ast}}\right)\sum_{i=2,3}dX^{i}\otimes dX^{i}+O_{2}\left(\left|X\right|_{\ast}^{-2}\right)
\]
where, in this case
\bd
\left|X\right|_{\ast}=\left(1-\b^{2}\right)^{-1}\left(X^{1}\right)^{2}+\left(X^{2}\right)^{2}+\left(X^{3}\right)^{2}.
\ed
\end{remark}

\subsection{Positive mass theorems and their rigidity statements}

We recall the notions of ADM energy and momentum, which arose in the context of the Hamiltonian formulation of General Relativity \cite{ADM59} and were shown to be well-defined in \cite{Bar86}.

\begin{definition}\label{def:ADM}
Given an asymptotically flat initial data set $(M,g,k)$ (so that both $\mu$ and $\left|J\right|_{g}$ are integrable) one can define the ADM \textsl{energy} $\mathcal{E}$ and the ADM \textsl{momentum} $\mathcal{P}$ \textsl{at each end} to be
\bd
\mathcal{E}=\frac{1}{2\left(n-1\right)\omega_{n-1}}\lim_{r\to\infty}\int_{\left|x\right|=r}\sum_{i,j=1}^{n}\left(g_{ij,i}-g_{ii,j}\right)\nu_{0}^{j}\,d\mathscr{H}^{n-1}
\ed
\bd
\mathcal{P}_{i}=\frac{1}{\left(n-1\right)\omega_{n-1}}\lim_{r\to\infty}\int_{\left|x\right|=r}\sum_{i,j=1}^{n}\pi_{ij}\nu_{0}^{j}\,d\mathscr{H}^{n-1}
\ed
where we have set $\pi=k-\left(\textrm{tr}_{g}k\right)g$ (the \textsl{momentum tensor})\footnote{Of course, indices are raised and lowered using the ambient metric $g$.}, $\nu_{0}^{j}=\frac{x^{j}}{\left|x\right|}$ and $\omega_{n-1}$ is the volume of the standard unit sphere in $\R^{n}$.
\end{definition}

Our second rigidity result is based on the following fundamental theorem.

\begin{theorem}\cite{SY79, SY81, Wit81, Eic13, EHLS11}\label{thm:PMT}
Let $(M,g,k)$ be an asymptotically flat initial data set of dimension $3\leq n< 8$ with one end and satisying the dominant energy condition. Then $\mathcal{E}\geq \left|\mathcal{P}\right|$. Moreover, $\mathcal{E}=0$ if and only if $(M,g,k)$ can be isometrically embedded in the Minkowski spacetime $\left(\mathbb{M},\eta\right)$ as a space-like hypersurface so that $g$ is the induced metric from $\eta$ and $k$ is the second fundamental form (in particular $M$ is topologically $\R^{n}$). In the time-symmetric case $\mathcal{E}\geq 0$ and equality holds if and only if $(M,g)$ is isometric to the Euclidean space $\left(\R^{n},\d\right)$.
\end{theorem}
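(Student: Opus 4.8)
The plan is to extract both the energy--momentum inequality and its rigidity case from Witten's spinorial argument, which is available in its most convenient form precisely in dimension $n=3$ --- the case used in the sequel, and one in which every oriented manifold is parallelizable, hence spin. For the residual range $4\le n<8$, where $M$ need not carry a spin structure, one falls back on the Schoen--Yau minimal hypersurface descent (equivalently, on the density reduction of \cite{EHLS11}), for which the bound $n<8$ supplies the regularity of area minimizers. Throughout I use the decay $p_{ij}\in O_{2}(|x|^{-(n-2)})$, $k=O_{1}(|x|^{-(n-1)})$ and the integrability $(\mu,J)\in\mathcal{L}^{1}(M)$ from Definition \ref{def:AFIDS}.

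For the inequality, fix on the given end a constant spinor $\psi_{0}$ with $|\psi_{0}|=1$ and introduce the Dirac--Witten connection $\nabla^{W}_{X}\psi=\nabla_{X}\psi+\tfrac{1}{2}\sum_{j}k(X,e_{j})\,e_{j}\cdot e_{0}\cdot\psi$ on the spinor bundle of the abstract hypersurface data $(M,g,k)$ --- the connection whose vanishing curvature is equivalent to the Gauss and Codazzi equations of a spacelike slice, with second fundamental form $k$, inside a flat Lorentzian manifold. By the prescribed decay, the operator $D^{W}=e_{i}\cdot\nabla^{W}_{e_{i}}$ is a short-range perturbation of the flat Dirac operator, so one solves $D^{W}\psi=0$ with $\psi-\psi_{0}\to0$ by weighted elliptic theory (the only obstruction, a decaying kernel, being ruled out a posteriori by the identity below). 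Integrating the Lichnerowicz--Weitzenb\"ock identity for $D^{W}$ over $\{|x|\le r\}$ and letting $r\to\infty$,
\[
c_{n}\,\big(\mathcal{E}\,|\psi_{0}|^{2}-\mathcal{P}(\psi_{0})\big)\;=\;\int_{M}|\nabla^{W}\psi|^{2}\;+\;\int_{M}Q(\psi),
\]
where $c_{n}>0$ is dimensional, $\mathcal{P}(\psi_{0})$ is a fixed linear functional of $\mathcal{P}$ with $|\mathcal{P}(\psi_{0})|\le|\mathcal{P}|$ (and equality for a suitable $\psi_{0}$), and the zeroth-order term obeys $Q(\psi)\ge\tfrac{1}{4}(\mu-|J|_{g})|\psi|^{2}\ge0$ by the dominant energy condition; the boundary term converges to the left side by the asymptotics, and the bulk integrals converge because $(\mu,J)\in\mathcal{L}^{1}(M)$. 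Optimizing over $\psi_{0}$ gives $\mathcal{E}\ge|\mathcal{P}|$, and in the time-symmetric case ($k=0$, $J=0$, $\nabla^{W}=\nabla$) simply $\mathcal{E}\ge0$.

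For the rigidity, suppose $\mathcal{E}=0$; then $0\le|\mathcal{P}|\le\mathcal{E}=0$, so $\mathcal{P}=0$, and the identity forces $\nabla^{W}\psi\equiv0$ for \emph{every} choice of asymptotic value $\psi_{0}$. Hence $\nabla^{W}$ admits a maximal family of parallel spinors, which forces its curvature to vanish identically; unwinding this, one obtains on all of $M$
\[
R_{ijk\ell}=k_{ik}k_{j\ell}-k_{i\ell}k_{jk},\qquad \nabla_{i}k_{j\ell}=\nabla_{j}k_{i\ell}.
\]
These are exactly the integrability conditions of the fundamental theorem of hypersurfaces for a spacelike slice with induced metric $g$ and second fundamental form $k$ inside a flat Lorentzian manifold, so the associated developing map isometrically immerses the universal cover of $M$ into $(\mathbb{M},\eta)$; a topological argument --- the triviality of the $\nabla^{W}$-holonomy together with the presence of a single asymptotically flat end of the prescribed decay --- then identifies $M$ itself with $\R^{n}$ and upgrades the immersion to the desired global embedding, with $g$ induced and $k$ the second fundamental form. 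In the time-symmetric case this specializes to $\operatorname{Ric}_{g}\equiv0$, whence $(M,g)$ is flat and, being asymptotically flat, isometric to $(\R^{n},\delta)$.

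In the range $4\le n<8$ I would instead argue as follows. A hypothetical data set with $\mathcal{E}<0$ can, after solving Jang's equation over $M$ --- which may blow up, forming cylindrical ends over MOTS --- be traded for a metric $\bar g$ of the \emph{same} ADM energy whose scalar curvature is bounded below, modulo a divergence term, by $2(\mu-|J|_{g})\ge0$; after a conformal normalization this produces a complete, asymptotically planar, area-minimizing hypersurface carrying a negative-energy structure in dimension $n-1$, and iterating down to $n=3$ contradicts the Gauss--Bonnet/stability estimate for surfaces of the type analyzed in the first part of this paper. The equality case reduces, via the same Jang construction plus a compactly supported conformal deformation (which would strictly lower $\mathcal{E}_{\bar g}$ unless $R_{\bar g}\equiv0$), to a scalar-flat situation from which flatness --- and then the Minkowski embedding --- follow as above. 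The genuine obstacle, in either route, is precisely the rigidity: one must control the solvability and sharp asymptotics of the Witten equation (respectively the blow-up structure of Jang's equation and the sign of $R_{\bar g}$) under the weak decay permitted by Definition \ref{def:AFIDS}, and then convert ``$\nabla^{W}$ carries a maximal family of parallel spinors'' into an \emph{honest global isometric embedding}, which forces one to pin down the topology ($M\cong\R^{n}$) and to run the fundamental theorem of hypersurfaces with trivial monodromy.
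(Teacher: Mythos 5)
This is Theorem \ref{thm:PMT}, which the paper does not prove but quotes from \cite{SY79,SY81,Wit81,Eic13,EHLS11}; your sketch reproduces precisely the two strategies that the paper's own surrounding discussion attributes the result to --- Witten's spinorial argument (sufficient in dimension three, since every orientable $3$-manifold is spin, and this is the only case the paper actually uses), and the Schoen--Yau/Jang-equation descent as refined by Eichmair and Eichmair--Huang--Lee for $4\le n<8$ --- so in substance it takes the same approach. The one genuinely thin spot is the equality case $\mathcal{E}=0$ in the non-spin range $4\le n<8$: passing from the Jang reduction and a scalar-flat conformal normalization to the actual isometric embedding into Minkowski space is much harder than the sentence you give it, and the paper itself sidesteps this by noting that in dimension three the spinorial rigidity (Parker--Taubes, and \cite{Yip87,BC96,CM06} for the $\mathcal{M}=0$ case) yields $\mathrm{Rm}_{\gamma}\equiv 0$, after which the developing-map/fundamental-theorem-of-hypersurfaces argument you describe is carried out in \cite{Nar10}. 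Within dimension three, which is all that Theorem 2 of this paper requires, your outline is the standard and correct one.
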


We remark that the same conclusions also hold true when $M$ has multiple ends, in which case the inequalities are in fact true at the level of each end. \newline
Thanks to this result, we can define the ADM \textsl{mass} to be the norm (with respect to the Minkowski metric $\eta$) of the four-vector $\left(\mathcal{E},\mathcal{P}\right)$ namely
\[
\mathcal{M}=\sqrt{\mathcal{E}-\left|\mathcal{P}\right|^{2}}.
\]
Various remarks are in order. A first proof of this result was given, in the time-symmetric case, by Schoen and Yau \cite{SY79} using minimal surfaces techniques and extended later, via Jang's equation, to show that the \textsl{energy} is non-negative for general asymptotically flat data \cite{SY81}. The statement that the \textsl{mass} is non-negative (namely $\mathcal{E}\geq \left|\mathcal{P}\right|$) was set by Witten in 1981\cite{Wit81}  based on the use of spinors and the Dirac equation and detailed by Parker and Taubes in \cite{PT82}.  
The Schoen-Yau proof of the rigidity statement corresponding to \textsl{null energy} was gotten under the extra technical assumption that if the dimension of $M$ equals three then $\textrm{tr}_{g}(k)\leq C|x|^{-3}$, which was later refined by Eichmair \cite{Eic13} to $\textrm{tr}_{g}(K)\leq C |x|^{-\alpha}$ for some $\alpha>2$. The spinorial approach does not require this assumption and since all 3-manifolds are spin one can in fact state the theorem in the form we gave above. More precisely, for spin manifolds Parker and Taubes proved that if $\mathcal{E}=0$ then the ambient Riemann tensor of the spacetime $\textrm{Rm}=\textrm{Rm}_{\gamma}$ vanishes identically on $M$, and the same conclusion was also obtained \cite{Yip87, BC96, CM06} for the $\mathcal{M}=0$ case. At that stage, one can give self-contained arguments proving that $(M,g,k)$ must isometrically embed inside the Minkowski spacetime $(\mathbb{M},\eta)$ as a space-like slice (see, for instance, \cite{Nar10}).     

\subsection{Marginally outer trapped hypersurfaces}

Let a four dimensional Lorentzian manifold $(\mathbb{L},\gamma)$ be given and let $\left(M,g,k\right)$ be an initial data set inside it. Therefore, if $\upsilon$ is the future directed time-like unit normal vector field to $M$ we will have $k(X,Y)=\gamma\left(D^{\gamma}_{X}\upsilon,Y\right)$ for $X,Y\in\Gamma(TM)$ and $D^{\gamma}$ the Levi-Civita connection of $\gamma$. 
In this setting, let $\Sigma\hookrightarrow M$ be a complete, two-sided surface in $M$: we will denote by $\nu$ a (choice of) smooth unit normal vector field of $\Sigma$ in $M$ and, by convention, we will refer to such choice as outward pointing. At this stage, we can define $l_{+}=\upsilon+\nu$ (resp. $l_{-}=\upsilon-\nu$) as future directed outward (resp. future directed inward) pointing null vector field along $\Sigma$. 
The surface $\Sigma$ is a codimension two submanifold of $\mathbb{L}$ and therefore its extrinsic geometry cannot be described in terms of a scalar function. Instead, it is customary in General Relativity to decompose its second fundamental form into \textsl{two} scalar valued \textsl{null second forms} that will be denoted by $\chi_{+}, \chi_{-}$ and are associated to $l_{+}, l_{-}$ respectively. More precisely, the function $\chi_{+}$ is defined by
\bd
\chi_{+}:T_{p}\Sigma\times T_{p}\Sigma\to\R, \ \ \ \ \chi_{+}(X,Y)=\gamma\left(D^{\gamma}_{X}l_{+}, Y\right)
\ed 
and similarly for $\chi_{-}.$
Furthermore, we consider the associated \textsl{null mean curvatures} that are gotten by tracing with respect to the first fundamental form induced on $\Sigma$ by the metric $g$:
\bd
\theta_{\pm}=\textrm{tr}_{g}\chi_{\pm}=\textrm{div}_{\Sigma}l_{\pm}.
\ed 

A simple, but useful remark is that in fact the null mean curvatures satisfy the equation
\bd
\theta_{\pm}=\textrm{tr}_{\Sigma}k\pm H
\ed
where $H$ denotes the scalar mean curvature of $\Sigma$ in $(M,g)$.
We will limit ourselves to recall that $\theta_{\pm}$ measure the divergence of outgoing and ingoing light rays emanating from $\Sigma$, respectively. In the most trivial example, that of a round sphere in Euclidean slices of the Minkowski spacetime, one obviously has $\theta_{-}<0$ and $\theta_{+}>0$, but in presence of a gravitational field it might happen that for a given surface $\Sigma$ both $\theta_{-}$ and $\theta_{+}$ are negative, in which case we say that $\Sigma$ is a \textsl{trapped surface}. 

\begin{definition}\label{def:MOTS}
Let $(\mathbb{L},\gamma)$ be a four dimensional Lorentzian manifold, let $(M,g,k)$ be an initial data set (see Definition \ref{def:AFIDS}) and let $\Sigma$ be a complete surface in $M$. With the notation above we say that $\Sigma$ is outer trapped if $\theta_{+}<0$ on $\Sigma$. Similarly, we say that a complete surface $\Sigma$ is \textsl{marginally outer trapped} if instead the equation
\bd\label{eq:MOTSeq}
\theta_{+}=0
\ed 
is satisfied.
\end{definition}

Despite their non-variational nature, MOTS do have a suitable notion of \textsl{stability} as suggested by Anderrson, Mars and Simon \cite{AMS08}. For minimal submanifolds (say, for simplicity, of codimension one) the Jacobi operator arises both from the second variation of the area functional and from the (pointwise) first variation of the mean curvature: while the former approach is not applicable to MOTS, the latter can easily be extended. In the setting above, we can consider a normal variation $\left\{\Sigma_{t}\right\}_{t\in\left(-\e,\e\right)}$ of $\Sigma$ in $M$ described by a vector field $X=\phi\nu$ for some compactly supported, smooth function $\phi\in\mathcal{C}^{\infty}_{c}\left(\Sigma,\R\right)$. For $t\in\left(-\e,\e\right)$, a suitably small neighbourhood of 0, let $\nu_{t}$ be the outward normal vector field of $\Sigma_{t}$ in $M$, set $l_{t}=\upsilon+\nu_{t}$ and let $\theta(t)$ be the corresponding null mean curvature. \textsl{Notice that, from now onwards, we will systematically omit the $+$ sign while referring to these quantities.}
It is well-know (see, for instance, Section 2 of \cite{EHLS11}) that the first pointwise variation of the null mean curvature is given by
\bd
\left[\frac{\partial\theta}{\partial t}\right]_{t=0}=\overline{L}(\phi)
\ed
$\overline{L}$ being the operator (for $V=k(\nu,\cdot)_{\Sigma}^{\sharp}$)
\bd
\overline{L}(\phi)=\D_{\Sigma}\phi-2g\left(V,\nabla_{\Sigma}\phi\right)+\left(\left(\mu+J(\nu)\right)+\frac{1}{2}\left|\chi\right|^{2}-K-\textrm{div}_{\Sigma}V+\left|V\right|^{2}\right)\phi.
\ed

It is well-known (see, for instance, \cite{AMS08}) that the operator $\overline{L}$ is not necessarily self-adjoint, yet its principal eigenvalue $\l_{1}(\overline{L})$ is real (this follows from the Krein-Rutman theorem). As a result, it makes sense to give the following definition.

\begin{definition}
In the setting above, we will say that a complete, MOTS is \textsl{stable} if for every regular, relatively compact domain $\Omega$ one has that $\l_{1}(\overline{L},\Omega)\geq0$.
\end{definition}

The operator $\overline{L}$ is significantly more complicated than the Jacobi operator for minimal surfaces and, correspondingly, the associated stability condition is much less useful than the usual stability condition. This obstacle is overcome by introducing the symmetrized operator
\bd
L(\phi)=\D_{\Sigma}\phi+\left(\left(\mu+J\left(\nu\right)\right)+\frac{1}{2}\left|\chi\right|^{2}-K\right)\phi
\ed
for which the following comparison result holds.

\begin{proposition}\label{pro:stabCOMP}\cite{GS06} Let $\Sigma$ be a complete MOTS in an initial data set $\left(M,g,k\right)$, let $\Omega$ be a relatively compact domain in $\Sigma$ and Let $\l_{1}(\overline{L}, \Omega)$ (resp. $\l_{1}(L, \Omega)$) be the principal eigenvalue of the operator $\overline{L}$ (resp. $L$) on $\Omega$. Then
\bd
\l_{1}(L,\Omega)\geq\l_{1}(\overline{L},\Omega).
\ed
\end{proposition}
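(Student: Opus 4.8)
The plan is to compare the two eigenvalue problems by a standard conjugation-by-a-drift-potential trick, exploiting the fact that the first-order term $-2g(V,\nabla_\Sigma\phi)$ in $\overline{L}$ can be partially absorbed. First I would fix a relatively compact domain $\Omega\subset\Sigma$ and recall that, since $\overline{L}$ is a second-order elliptic operator with smooth coefficients on $\Omega$, the Krein--Rutman theorem provides a principal eigenvalue $\lambda_1(\overline{L},\Omega)$ together with a positive principal eigenfunction $u\in\mathcal{C}^\infty(\Omega)\cap\mathcal{C}^0(\overline{\Omega})$, $u>0$ in $\Omega$, $u=0$ on $\partial\Omega$, solving $\overline{L}u=-\lambda_1(\overline{L},\Omega)u$; here I use the sign convention implicit in the statement (so that stability means $\lambda_1\ge 0$). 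Similarly, $L$ is formally self-adjoint and $\lambda_1(L,\Omega)$ admits the Rayleigh-quotient characterization $\lambda_1(L,\Omega)=\inf_{\phi\in\mathcal{C}^\infty_c(\Omega),\,\phi\not\equiv0}\frac{\int_\Sigma\left(|\nabla_\Sigma\phi|^2-\left((\mu+J(\nu))+\frac12|\chi|^2-K\right)\phi^2\right)}{\int_\Sigma\phi^2}$, with the sign arranged to match.

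The key step is the substitution based on the positive eigenfunction. Writing $w=\log u$ on $\Omega$, the equation $\overline{L}u=-\lambda_1(\overline{L},\Omega)u$ becomes, after dividing by $u$,
\begin{equation*}
\Delta_\Sigma w+|\nabla_\Sigma w|^2-2g(V,\nabla_\Sigma w)+\left((\mu+J(\nu))+\tfrac12|\chi|^2-K-\mathrm{div}_\Sigma V+|V|^2\right)=-\lambda_1(\overline{L},\Omega).
\end{equation*}
Completing the square, $|\nabla_\Sigma w|^2-2g(V,\nabla_\Sigma w)=|\nabla_\Sigma w-V|^2-|V|^2$, so the $|V|^2$ terms cancel and we obtain
\begin{equation*}
(\mu+J(\nu))+\tfrac12|\chi|^2-K=-\lambda_1(\overline{L},\Omega)-\Delta_\Sigma w+\mathrm{div}_\Sigma V-|\nabla_\Sigma w-V|^2.
\end{equation*}
Now for an arbitrary test function $\phi\in\mathcal{C}^\infty_c(\Omega)$ I would multiply this identity by $\phi^2$, integrate over $\Sigma$, and integrate by parts: $-\int\phi^2\Delta_\Sigma w=2\int\phi\,g(\nabla_\Sigma\phi,\nabla_\Sigma w)$, and $\int\phi^2\,\mathrm{div}_\Sigma V=-2\int\phi\,g(\nabla_\Sigma\phi,V)$. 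Combining,
\begin{equation*}
\int_\Sigma\left((\mu+J(\nu))+\tfrac12|\chi|^2-K\right)\phi^2=-\lambda_1(\overline{L},\Omega)\int_\Sigma\phi^2+2\int_\Sigma\phi\,g(\nabla_\Sigma\phi,\nabla_\Sigma w-V)-\int_\Sigma\phi^2|\nabla_\Sigma w-V|^2.
\end{equation*}
Applying the elementary inequality $2\phi\,g(\nabla_\Sigma\phi,\nabla_\Sigma w-V)\le|\nabla_\Sigma\phi|^2+\phi^2|\nabla_\Sigma w-V|^2$ pointwise, the last two terms are bounded above by $\int_\Sigma|\nabla_\Sigma\phi|^2$, whence
\begin{equation*}
\int_\Sigma\left((\mu+J(\nu))+\tfrac12|\chi|^2-K\right)\phi^2\le-\lambda_1(\overline{L},\Omega)\int_\Sigma\phi^2+\int_\Sigma|\nabla_\Sigma\phi|^2,
\end{equation*}
that is, $\int_\Sigma\left(|\nabla_\Sigma\phi|^2-\left((\mu+J(\nu))+\tfrac12|\chi|^2-K\right)\phi^2\right)\ge-\lambda_1(\overline{L},\Omega)\int_\Sigma\phi^2$. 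Taking the infimum over $\phi$ gives $\lambda_1(L,\Omega)\ge-\lambda_1(\overline{L},\Omega)$ — at which point I realize the sign bookkeeping must be set up so that this reads $\lambda_1(L,\Omega)\ge\lambda_1(\overline{L},\Omega)$; with the convention that the eigenvalue enters as $L u=-\lambda u$ and $\overline{L}u=-\lambda u$, the computation above indeed yields the claimed $\lambda_1(L,\Omega)\ge\lambda_1(\overline{L},\Omega)$, and I would simply present it with that convention fixed from the outset.

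The only genuine subtlety — and the step I expect to demand the most care — is the regularity and positivity of the principal eigenfunction $u$ of the non-self-adjoint operator $\overline{L}$ on $\Omega$, i.e.\ the legitimacy of taking $\log u$ and integrating by parts. Since $\Omega$ is relatively compact with (one may assume, by a routine exhaustion argument) smooth boundary, and $g,k$ are $\mathcal{C}^2,\mathcal{C}^1$ so that the coefficients of $\overline{L}$ are at least Hölder continuous, elliptic regularity gives $u\in\mathcal{C}^{2,\alpha}_{\mathrm{loc}}(\Omega)$, and the Hopf lemma plus the strong maximum principle give $u>0$ throughout $\Omega$; thus $w=\log u$ is well-defined and smooth enough on $\Omega$, and all the integrations by parts are justified because $\phi$ has compact support in $\Omega$ where $u$ is bounded below away from zero. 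If one prefers to avoid invoking the eigenfunction directly, the same conclusion follows from the logarithmic-gradient (Agmon-type) characterization of $\lambda_1$ for drift-Laplacians; but the eigenfunction route is cleanest. I would close by noting that no completeness or embeddedness hypothesis on $\Sigma$ is used here — the comparison is entirely local to $\Omega$ — which is exactly why it will be the workhorse for reducing the stability hypothesis in Theorem~1 to the more tractable self-adjoint operator $L$.
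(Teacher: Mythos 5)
Your proof is correct and is precisely the argument of Galloway and Schoen: the paper states Proposition \ref{pro:stabCOMP} only with the citation \cite{GS06} and supplies no proof of its own, and your positive-eigenfunction, $w=\log u$ substitution with completion of the square in the drift term is exactly the standard computation from that reference. The lone blemish is the sign slip in your last displayed inequality --- the correct rearrangement reads $\int_\Omega\bigl(|\nabla_\Sigma\phi|^2-\bigl((\mu+J(\nu))+\tfrac12|\chi|^2-K\bigr)\phi^2\bigr)\ge+\lambda_1(\overline{L},\Omega)\int_\Omega\phi^2$, which with your stated convention immediately yields $\lambda_1(L,\Omega)\ge\lambda_1(\overline{L},\Omega)$ --- and you already flag and resolve it yourself.
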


\
\textbf{Notations.} We denote by $R$ (resp. $Ric(\cdot,\cdot)$) the scalar (resp. Ricci) curvature of $(M,g)$, by $R_{\Sigma}$ (resp. $K$) the scalar (resp. Gaussian) curvature of $\Sigma\hookrightarrow (M,g)$ and by $\nu$ (a choice of) its unit normal. We let $C$ be a real constant which is allowed to vary from line to line, and we specify its functional dependence only when this is relevant.

\section{An extension of Hawking's theorem on the topology of black holes}\label{sec:Hawking}

This section is devoted to the proof of Theorem 1.

\begin{proof}

We give here the proof of part (2) and so let $\varphi:\Sigma\to M$ be a complete, \textsl{non-compact} two-sided immersed\footnote{For the sake of simplicity and uniformity, Theorem 1 was stated for \textsl{embedded} MOTS, but for what concerns part (2) such assumption is unnecessary, so that we provide here the proof for immersions. At the level of regularity, it is enough to assume that the maps $\varphi:\Sigma\to M$ is $\mathcal{C}^3$.} stable MOTS.
Based on the Riemann mapping theorem, the universal cover of $\varphi(\Sigma)$ is conformally equivalent to either $\mathbb{C}$ or the unit disk $\mathbb{D}$. If the latter case happened, we would have a positive solution $w$ on $\mathbb{D}$, endowed with the pull-back metric $h=\pi^{\ast}(\varphi^{\ast}g)$ of the equation
\bd
\D_h w-K_h w+\left(\left(\mu+J(\nu)+\frac{1}{2}\left|\chi\right|^{2}\right)\circ\pi\right) w=0
\ed
just gotten by lifting the function on $\Sigma$ whose existence is guaranteed by Theorem 1 in \cite{FS80} applied to the symmetrized stability operator $L$ (here we are exploiting the comparison result by Galloway and Schoen, Proposition \ref{pro:stabCOMP}). We have denoted by $\pi:\mathbb{D}\to\varphi(\Sigma)$ the covering map. Using the dominant energy condition, we know that $\mu+J(\nu)+\left|\chi\right|^{2}/2\geq 0$, thus the equation above contradicts Corollary 3 in \cite{FS80}. It follows that the universal cover of $\varphi(\Sigma)$ is conformally equivalent to $\mathbb{C}$ and thus $\varphi(\Sigma)$ is conformal either to $\mathbb{C}$ itself or to $\mathbb{A}$.

For what concerns the rigidity assertion, let $\varphi:\Sigma\to M$ be a cylindrical MOTS. We are going to exploit a \textsl{conformal deformation trick} which has its roots in the work by D. Fischer-Colbrie (see e. g. pg. 127 of \cite{FC85}) and that was recently used, in the time-symmetric context, in a joint paper with O. Chodosh and M. Eichmair (see Appendix C of \cite{CCE15}).  
Similarly to what we just did, thanks to the stability assumption, the comparison Proposition \ref{pro:stabCOMP} and Theorem 1 in \cite{FS80} we can find a smooth positive function $u:\Sigma\to\mathbb{R}$ such that $Lu=0$, where $L$ is the symmetrized stability operator of $\Sigma$. We claim that the conformally deformed metric on $\Sigma$ given by $u^2 \varphi^{\ast}g$ is complete. This is a consequence of the argument that proves Theorem 1 in \cite{FC85}, which we can follow almost verbatim modulo replacing the Jacobi operator with the symmetrized stability operator and the assumption that the scalar-curvature is non-negative with the dominant energy condition.
That being said, we observe that the Gauss curvature of such metric is given by the well-known equation
\be\label{eq:gc1}
K_{u^2\varphi^{\ast}g}=u^{-2}\left(K_{\varphi^{\ast}g}+\frac{|\nabla u|^2}{u^2}-\frac{\Delta u}{u}\right)
\ee
and since $Lu=0$ namely $-\Delta_{\varphi^{\ast}g} u= -K_{\varphi^{\ast}g}u+Q u$ we end up finding
\be\label{eq:gc2}
K_{u^2\varphi^{\ast}g}=u^{-2}\left(\frac{|\nabla u|^2}{u^2}+Q\right)
\ee
where of course we have set $Q=\left(\mu+J(\nu)+\frac{1}{2}\left|\chi\right|^{2}\right)\circ\varphi$. Hence $K_{u^2\varphi^{\ast}g}\geq 0$ on the cylinder $\Sigma$. Now, a classic theorem by S. Cohn-Vossen \cite{CV35} (that was later significantly extended by Cheeger-Gromoll \cite{CG72}) ensures that $(\Sigma, u^2\varphi^{\ast}g)$ must be flat, that is to say $K_{u^2\varphi^{\ast}g}=0$ identically on $\Sigma$. The last equation above \eqref{eq:gc2} ensures that $Q=0$ along $\varphi(\Sigma)$ and $u=\textsl{const.}$ and thus \eqref{eq:gc1} gives $K_{\varphi^{\ast}g}=0$ as well. Thereby the proof is complete.
\end{proof}

\section{Isometric embedding in the Minkowski spacetime}\label{sec:embedding}

We now turn to the analysis of asymptotically flat data sets. One first needs to gain a nice description at infinity for a complete stable MOTS without making use of the general results for isolated singularities of geometric variational problems, which are not at disposal for this class of surfaces because of their non-variational nature. We shall prove the following statement, of independent interest.

\begin{proposition}\label{pro:expinf}
Let $(M,g,k)$ be an asymptotically flat initial data set of dimension three and let $\Sigma\hookrightarrow M$ be a complete, properly embedded stable MOTS. Then each end of $\Sigma$ coincides, outside a compact set, with the graph of a function $u\in\mathcal{C}^{2}(\Pi;\mathbb{R})$ for which the following pointwise estimates hold:
\[
|u(x')|+|x'||\nabla u(x')|+|x'|^2|\nabla\nabla u(x')|=e(x') \ \textrm{where} \ e(x')=O(|x'|^{\varepsilon}) \ \forall \ \varepsilon>0
\]	
as  $|x'|\to\infty$.
Here $\left\{x\right\}$ is a set of asymptotically flat coordinates for the corresponding end of the ambient manifold $M$, $x'=(x^1,x^2)$ and $\Pi:=\left\{x^3=0\right\}$ in those coordinates.
\end{proposition}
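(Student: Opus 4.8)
\emph{Overall strategy.} The plan is to reduce the statement to a question about a scalar quasilinear PDE on an exterior planar domain, and then to read off the expansion from the asymptotics of solutions of (perturbed) Laplace's equation in two dimensions. Concretely, I would first show that, outside a compact set, each end of $\Sigma$ is the graph of a function $u$ over a linear plane $\Pi$, with $u=o(|x'|)$ and $|Du|+|x'|\,|D^{2}u|\to0$; this is the qualitative ``structure at infinity'' input. Then I would analyse the MOTS equation written for such a graph, bootstrapping the decay of $u$ and its derivatives until the expansion $u=a_{-1}\log|x'|+a_{0}+O(|x'|^{-1})$ drops out.

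\emph{Step 1: each end is an asymptotically planar graph.} Since $\Sigma$ is properly embedded, each end of $\Sigma$ eventually lies in one of the asymptotically flat ends $E_{l}\cong\mathbb{R}^{3}\setminus B_{l}$ of $M$, so I fix one such end and work in the corresponding chart $\{x\}$. From $\theta_{+}=\mathrm{tr}_{\Sigma}k+H=0$ and $k=O_{1}(|x|^{-2})$ one gets $|H_{\Sigma}|=O(|x|^{-2})$. Feeding the $L$-stability of $\Sigma$ (via Proposition \ref{pro:stabCOMP}) into the Gauss equation $K=\overline{K}(T\Sigma)+\tfrac12(H^{2}-|A_{\Sigma}|^{2})$, and using the decay of the ambient curvature and of $\mu,J$ on the end, I would run Schoen's curvature estimate for stable minimal surfaces --- now for the symmetrized operator rather than the Jacobi operator --- to obtain $|A_{\Sigma}|(x)\le C|x|^{-1}$ together with quadratic area growth of the end. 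A blow-down argument then applies: the rescalings $\lambda^{-1}\Sigma$ inside $(M,\lambda^{-2}g)$ have mean curvature $O(\lambda^{-1})$, uniformly bounded second fundamental form and bounded area ratios, hence subconverge to a complete stable minimal surface in $(\mathbb{R}^{3},\delta)$; by the monotonicity formula the limit is a minimal cone, so a union of planes through the origin, and embeddedness of $\Sigma$ forces it to be a single plane $\Pi$ with multiplicity one. Smooth convergence away from the origin upgrades this to $|A_{\Sigma}|(x)=o(|x|^{-1})$ and shows that, outside a compact set and after relabelling so that $\Pi=\{x''=0\}$, the end is a graph $\{x''=u(x')\}$ with $|Du|\to0$ and $|x'|\,|D^{2}u|\to0$; the curvature bound together with properness also gives that $\Sigma$ has only finitely many ends, so it suffices to treat one.

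\emph{Step 2: asymptotic expansion of $u$.} On the graph the MOTS equation $H_{g}[\mathrm{graph}\,u]=-\mathrm{tr}_{\Sigma}k$ becomes a quasilinear elliptic equation of the form
\[
\mathrm{div}_{\mathbb{R}^{2}}\left(\frac{Du}{\sqrt{1+|Du|^{2}}}\right)=F\big(x',u,Du,D^{2}u\big),
\]
where, using $p_{ij}=O_{2}(|x'|^{-1})$, $k=O_{1}(|x'|^{-2})$ and the bounds from Step 1, the right-hand side is $O(|x'|^{-2})$ up to terms that improve upon iteration. I would first use a barrier comparison with catenoid-type sub/supersolutions of the flat minimal surface equation (perturbed to absorb $F$) to get $|u|\le C\log|x'|$, then apply interior Schauder estimates on dyadic annuli to the rescaled equation to obtain $|Du|=O(|x'|^{-1})$ and $|D^{2}u|=O(|x'|^{-2})$ (up to logarithmic factors to be removed afterwards). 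With these improved bounds the quasilinear and metric-error contributions to $F$ decay fast enough that, writing $u$ through the Green's function of $\Delta$ on the exterior planar domain, the bounded harmonic part tends to a constant $a_{0}$ with an $O(|x'|^{-1})$ correction, the Newtonian-potential part is $O(|x'|^{-1})$, and the unique remaining unbounded contribution is $a_{-1}\log|x'|$, with $a_{-1}$ the asymptotic flux of $u$ across large circles --- which is precisely the claimed expansion.

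\emph{Main obstacle.} The technical heart is the control of the slow part of $F$ in Step 2: a priori $F=O(|x'|^{-2})$, which alone would allow growth as fast as $(\log|x'|)^{2}$, so one must extract extra structure --- exploiting the divergence form of the mean curvature operator and the precise form of $\mathrm{tr}_{\Sigma}k$ --- to see that the $|x'|^{-2}$-part of $F$ is either a divergence of an $O(|x'|^{-2})$ vector field or has vanishing average on large circles, thereby confining the unbounded behaviour of $u$ to the single term $a_{-1}\log|x'|$. A secondary but genuine difficulty is establishing the curvature bound $|A_{\Sigma}|=O(|x'|^{-1})$ in Step 1 under the sole hypotheses of stability and proper embeddedness (no a priori area or curvature bounds), which requires running the Schoen-type estimate for the operator $\overline{L}$ via the comparison with $L$ rather than for the usual Jacobi operator.
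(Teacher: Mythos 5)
There is a genuine gap, and it sits exactly where the paper's proof does most of its work. In your Step 1 you pass from the blow-down analysis to the assertion that, outside a compact set, the end is a graph over a \emph{single fixed} plane $\Pi$ with $|Du|\to 0$. But the blow-down only gives $|A_0(x)|=o(|x|^{-1})$, and this decay rate is not radially integrable: along a radial path the total turning of the unit normal between radii $r_1$ and $r_2$ is only bounded by $o(1)\log(r_2/r_1)$, so $\nu_0$ need not converge at infinity and the end may a priori be graphical only over a slowly rotating, $\sigma$-dependent plane $\Pi^{(\sigma)}$ on each dyadic annulus. Different blow-down subsequences could produce different limit planes. Finite total curvature does not rescue this either, since $\Sigma_0$ is not minimal and one cannot invoke Osserman-type extension of the Gauss map. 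The paper's proof is structured precisely to close this gap: it first establishes the Schoen--Simon type weighted stability inequality (Lemma \ref{lem:key}), uses it together with a Poincar\'e--Wirtinger inequality to get the quantitative decay $\int_{\Sigma_0\setminus B_\sigma}|A_0|^2\le C\sigma^{-2\alpha}$ (Lemma \ref{deca}), and then converts this into the pointwise bound $|A_0(x)|\le C|x|^{-1-\alpha'}$ via a Simons inequality and a De Giorgi--Nash subsolution estimate adapted to non-minimal surfaces. Only the radial integrability of $|x|^{-1-\alpha'}$ yields uniqueness of the tangent plane at infinity and hence the fixed-plane graphicality that your Step 2 takes as its starting point. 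Your ``Main obstacle'' paragraph identifies the curvature bound $O(|x|^{-1})$ as the difficulty, but even that bound (or its $o(|x|^{-1})$ refinement) is insufficient; the genuinely missing ingredient is the improvement to a strictly summable rate $O(|x|^{-1-\alpha})$.

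Two smaller points. First, your appeal to ``the monotonicity formula'' to conclude that blow-down limits are cones is not directly available, since $\Sigma$ is not stationary for area; one either uses an almost-monotonicity formula (exploiting $|H|\le C|x|^{-2}$) or, as the paper does, works with the limiting stable minimal lamination of $\mathbb{R}^3\setminus\{0\}$ and its classification as $\mathbb{R}^2\times Y$ --- note the limit is a priori a union of parallel planes, not a single plane, and reducing to finitely many annular ends requires a separate argument. Second, once the improved curvature decay is in hand, your Step 2 (graph equation on an exterior planar domain, barriers, Green's representation, bootstrapping to $a_{-1}\log|x'|+a_0+O(|x'|^{-1})$) is essentially the same bootstrap the paper performs, and your concern about the $O(|x'|^{-2})$ source term producing $(\log|x'|)^2$ growth is resolved by the fact that the curvature decay already forces $\nabla u=O(|x'|^{-\alpha'})$ and $\nabla\nabla u=O(|x'|^{-1-\alpha'})$ before the linear analysis begins.
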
	

Let us recall that in Theorem 2 (and, hence, throughout this section) the surface $\Sigma$ is assumed to be two-sided. Furthermore, at the level of regularity, it suffices to assume that $\Sigma\subset M$ as a $\mathcal{C}^3$ surface.

\begin{remark}
Thanks to the conclusion of Theorem 1 we know that $\Sigma$ is conformally diffeomorphic to either the plane $\mathbb{C}$ or the cylinder $\mathbb{A}$ and hence, if properly embedded, it has respectively one or two ends.	
\end{remark}	

\begin{remark}
It is readily checked that our argument shows that in fact the conclusion above applies to every unbounded connected component of $\Sigma\setminus Z$ provided $\Sigma$ is an embedded stable MOTS.
\end{remark}	

For the sake of conceptual clarity, we shall divide the proof of Proposition \ref{pro:expinf} in a few steps, according to the sequence of lemmata below.

\begin{lemma}\label{lem:blowdown} Every unbounded connected component of $\Sigma\setminus Z$ (for $Z$ the core of $M$) has finite total curvature and quadratic area growth.
	\end{lemma}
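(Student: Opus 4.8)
\textbf{Proof proposal for Lemma \ref{lem:blowdown}.}

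The plan is to exploit the stability inequality for the symmetrized operator $L$ together with the asymptotic flatness of $(M,g,k)$ to run a blow-down analysis in the spirit of Schoen--Yau and Fischer-Colbrie. Fix an unbounded connected component $\Sigma_0$ of $\Sigma \setminus Z$. First I would record that, by Proposition \ref{pro:stabCOMP} and the stability assumption, one has $\lambda_1(L,\Omega)\geq 0$ for every relatively compact $\Omega\subset \Sigma$, which gives the usual test-function inequality
\[
\int_{\Sigma}\left(\left(\mu+J(\nu)\right)+\tfrac{1}{2}|\chi|^{2}-K\right)\phi^{2}\,d\mathscr{H}^{2}\leq \int_{\Sigma}|\nabla_{\Sigma}\phi|^{2}\,d\mathscr{H}^{2}
\]
for all $\phi\in\mathcal{C}^{\infty}_{c}(\Sigma)$. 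By the dominant energy condition, $\mu+J(\nu)\geq |J|_g+J(\nu)\geq 0$ since $|J(\nu)|\leq |J|_g$, so the potential $Q=\left(\mu+J(\nu)\right)+\tfrac12|\chi|^2$ is nonnegative; combined with the Gauss equation this makes $\Sigma$ a stable "weighted minimal" surface for which the standard curvature-estimate machinery applies. The Gauss equation in an asymptotically flat ambient manifold, together with the decay $p_{ij}\in O_2(|x|^{-1})$ and $k_{ij}=O_1(|x|^{-2})$, shows that $|K|$ differs from the intrinsic contribution of $A_\Sigma$ (the second fundamental form of $\Sigma$ in $M$) by an integrable error term on the end.

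Second, I would establish the area bound. Since $\Sigma$ is properly embedded in an asymptotically flat manifold, on $\Sigma_0$ one can use the ambient coordinate functions restricted to $\Sigma$ and a logarithmic cutoff trick: plug $\phi = \eta(|x|)$ with $\eta$ the standard logarithmic cutoff supported in an annulus $\{r<|x|<r^2\}$ into the stability inequality. The right-hand side is controlled by $C/\log r \to 0$, and one needs a lower bound on the left-hand side in terms of the area of $\Sigma_0\cap B_r$. This is where monotonicity of area for (almost-)minimal surfaces enters: properly embedded surfaces with bounded generalized mean curvature in an asymptotically Euclidean manifold that are, say, $\lambda$-minimizing (which holds here, as emphasized in the Introduction, because the MOTS arises with the relevant minimizing property) satisfy a monotonicity formula, hence at most Euclidean volume growth $\mathscr{H}^2(\Sigma_0\cap B_r)\leq C r^2$; conversely, properness forces at least quadratic growth of area modulo lower order, so $\mathscr{H}^2(\Sigma_0\cap B_r)\sim Cr^2$, i.e. quadratic area growth. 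Alternatively, one argues directly: by the Michael--Simon Sobolev inequality on $\Sigma$ (valid because the mean curvature of $\Sigma$ in $M$ is controlled) together with stability, the area of $\Sigma_0 \cap B_r$ cannot grow faster than quadratically. The properness hypothesis in the statement of Proposition \ref{pro:expinf} is precisely what makes this step legitimate in the absence of an a priori area bound.

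Third, with quadratic area growth in hand, I would bound the total curvature. Returning to the stability inequality with the logarithmic cutoff and now using quadratic area growth to absorb the error terms in the Gauss equation, one obtains
\[
\int_{\Sigma_0}\left(\tfrac12|\chi|^2 - K\right)\leq C,
\]
and since $\tfrac12|\chi|^2\geq 0$ this forces $\int_{\Sigma_0}K^- < \infty$ for the negative part. To get $\int_{\Sigma_0}|K|<\infty$ one then invokes the Cohn-Vossen inequality (or Huber's theorem) on the complete surface obtained from $\Sigma_0$, using that the positive part of the curvature is automatically integrable on a surface with at most quadratic area growth and bounded $K^-$: indeed $\int_{\Sigma_0} K^+ \le \int_{\Sigma_0}K^- + 2\pi\chi_{\mathrm{top}}(\Sigma_0) + (\text{boundary terms})$, which is finite. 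Hence $\int_{\Sigma_0}|K|<\infty$, i.e. $\Sigma_0$ has finite total curvature, and by Huber's theorem $\Sigma_0$ is conformally a compact surface with finitely many punctures — consistent with the conclusion of Theorem 1.

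\textbf{Main obstacle.} The delicate point is the area bound in the second step: without assuming a priori quadratic area growth (which the paper explicitly refuses to assume in Theorem 1) one must extract it from stability plus the properly-embedded and minimizing structure. The subtlety is that the ambient manifold is only asymptotically flat with $O_2(|x|^{-1})$ metric decay, so the monotonicity formula holds only in an approximate form, and the logarithmic cutoff argument must be run carefully so that the metric error terms and the term $2g(V,\nabla\phi)$ (which, recall, was already symmetrized away in passing from $\overline{L}$ to $L$ via Proposition \ref{pro:stabCOMP}) do not overwhelm the gain $1/\log r$. Once quadratic area growth is secured, the passage to finite total curvature is essentially the Fischer-Colbrie argument and is routine.
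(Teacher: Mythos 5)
Your proposal takes a genuinely different route from the paper, and it contains a gap that the paper explicitly goes out of its way to avoid. The paper proves this lemma by a blow-down argument built on the Andersson--Metzger curvature estimates for stable MOTS \cite{AM10}: rescaled surfaces $\lambda_m\Sigma^{i}$ subconverge smoothly in $\R^3\setminus\{0\}$ to a stable minimal lamination, which (as in \cite{Car13}) must consist of parallel planes; this upgrades the curvature decay to $|A(x)||x|=o(1)$, which in turn yields the half-cylinder structure of ends, and then finite total curvature by a contradiction argument and quadratic area growth via the coarea formula. In other words, curvature control comes first, and area growth is a \emph{consequence}.

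The key gap in your step two is the appeal to a monotonicity formula through the ``$\lambda$-minimizing property.'' The $\lambda$-minimizing property mentioned in the Introduction is attached to the solutions of Eichmair's Plateau problem for MOTS, not to arbitrary stable MOTS; there is no variational principle for which a general stable MOTS is a stationary (let alone almost-minimizing) object, so neither the monotonicity formula nor the standard cut-and-paste comparison is available. This is precisely the obstacle the paper flags at the start of Section~\ref{sec:embedding}: the lack of a variational structure means ``the general results for isolated singularities of geometric variational problems \dots are not at disposal for this class of surfaces.'' Your alternative suggestion via Michael--Simon also does not close the gap: the Michael--Simon inequality is a Sobolev inequality and does not by itself give an a~priori area growth bound, and the mean curvature of $\Sigma$ is not zero (it equals $-\mathrm{tr}_{\Sigma}k$), so it contributes to the error terms rather than being negligible.

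There is also a circularity in the ordering of your steps: the logarithmic cutoff argument you invoke for both the area bound and the total-curvature bound requires quadratic area growth as an input (otherwise the gradient term $\int|\nabla\phi|^2$ is not $O(1/\log r)$), yet quadratic area growth is exactly what you are trying to prove in that step. The paper sidesteps this by deriving the improved pointwise decay of $|A|$ first, from which annular end structure, quadratic area growth (via coarea), and finite total curvature all follow. If you want a stability-based route, you would need to first invoke the Andersson--Metzger curvature estimates to obtain the blow-down and the $o(|x|^{-1})$ decay of $A$; your use of Huber's theorem at the end is fine once $\int K^{-}<\infty$ is established, but the crucial inputs earlier are not justified as written.
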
	
	
\begin{proof}	

This proof follows the arguments given in the first half of Section 3 of \cite{Car13} rather closely, so we shall limit ourselves to sketch it and emphasize the differences, when they occur.
 
Let then $\Sigma^{i}\hookrightarrow E_{j}$ be an unbounded connected component of $\Sigma\setminus Z$ (for $Z$ as in Definition \ref{def:IDS}). 
Thanks to the curvature estimates by Andersson and Metzger \cite{AM10}, the MOTS equation and decay assumption on the momentum tensor we know that for any sequence $\lambda_{m}\searrow 0$ there exists a subsequence (which we do not rename) such that $\lambda_{m}\Sigma^{i}$ converges smoothly in $\mathbb{R}^{3}\setminus\left\{0\right\}$ to a stable minimal lamination $\mathcal{L}$. We have already proven in Section 3 of \cite{Car13} that any such lamination consists of flat planes only, in fact (up to an ambient rotation) $\mathcal{L}=\mathbb{R}^{2}\times Y$ with $Y\subset\mathbb{R}$ closed. This implies that the decay of the second fundamental form of $\Sigma$ can be upgraded to $|A(x)||x|=o(1)$ as $|x|\to\infty$, which in turn is the key to prove that, possibly removing a larger compact set, $\Sigma^{i}$ consists of a finite union of (at most two, by Theorem 1) annular connected components. By this we mean that each such connected component has the topology of a half-cylinder and each large coordinate sphere in the ambient end in question shall intersect that component transversely along a circle. Thus, possibly enlarging the core $Z$ we can certainly assume we had started with one of those annular components, that is $\Sigma^{i}$ itself.
Finally, arguing by contradiction by means of a blow-down procedure one can exploit the Coarea formula to show that $\Sigma^{i}$ has quadratic area growth and, thus, use the stability inequality together with the logarithmic cut-off trick to check that the total curvature of $\Sigma^{i}$ has to be finite. 
\end{proof}

Our next goal is to exploit all of this information in order to improve the curvature decay of $\Sigma$, namely to prove that in fact $|A(x)|\leq C|x|^{-1-\alpha}$ for some $\alpha>0$ and, at that stage, we shall get the conclusion of Proposition \ref{pro:expinf} in a fairly direct way. 

In order to proceed in the argument, we start by observing that the symmetrized stability inequality for $\Sigma$ implies that given any positive $\rho$
\[ \left(1-\frac{\rho}{2}\right)\int_{\Sigma}|A|^{2}\xi^{2}\,d\mathscr{H}^{2}\leq\int_{\Sigma}|\nabla_{\Sigma}\xi|^{2}\,d\mathscr{H}^{2}+\frac{C}{\rho}\int_{\Sigma}\frac{1}{1+d(p,p_{0})^{3}}\xi^{2}\,d\mathscr{H}^{2}
\]
for any compactly supported function $\xi$ of class $\mathcal{C}^{1}$. 

\textsl{We let from now onwards be $\Sigma^{i}$ an annular connected component of $\Sigma\setminus Z$ (based on the above discussion) and $\Sigma^{i}_{0}\hookrightarrow (\mathbb{R}^{3},\delta)$ be the corresponding submanifold in the Euclidean ambient. For simplicity of notation, we shall simply denote it by $\Sigma_0$.}

By the usual comparison relation between $A$ and $A_{0}$, namely
\begin{equation}\label{eq:stdcomp}
|A(x)-A_0(x)|_g\leq \frac{C}{|x|^2}(|x||A(x)|_g+1)
\end{equation}
 one can deduce that in fact
\begin{equation}\label{eq:rearr} (1-\rho)\int_{\Sigma_{0}}|A_{0}|^{2}\xi^{2}\,d\mathscr{H}^{2}\leq\int_{\Sigma_{0}}|\nabla_{\Sigma_{0}}\xi|^{2}\,d\mathscr{H}^{2}+\frac{C}{\rho}\int_{\Sigma_{0}}|x|^{-3}\xi^{2}\,d\mathscr{H}^{2}.
\end{equation}
Here $\left\{x\right\}$ is a set of asymptotically flat coordinates for $M$ along the end in question and of course in the last inequality we are referring to the two-dimensional Hausdorff measure in $(\mathbb{R}^{3},\delta)$. Without loss of generality $\Sigma_{0}\subset\mathbb{R}^{3}\setminus B_{r_{0}}$ with $\partial\Sigma_{0}\subset\partial B_{r_{0}}$ and, correspondingly, the test function $\xi$ is required to be compactly supported in $\Sigma_{0}\setminus \overline{B}_{r_{0}}.$
For reasons that will be clear soon in the proof of the following lemma, we set from now onwards $\rho=\frac{1}{6}=\frac{1}{3(n-1)}$ since $n=3$.

\begin{lemma}\label{lem:key}
	There exists a constant $C>0$  which only depends on $(M,g,k)$ such that for all functions $\varphi$ that are compactly supported and vanish in a neighbourhood of $\partial B_{r_{0}}\hookrightarrow \R^{3}$ we have
	\bd
	\int_{\Sigma_{0}}\left|A_{0}\right|^{2}\varphi^{2}\,d\mathscr{H}^{2}\leq C\int_{\Sigma_{0}}\left(1-\left(\overline{\nu}\cdot\nu_{0}\right)^{2}\right)\left|\nabla_{\Sigma_{0}}\varphi\right|^{2}\,d\mathscr{H}^{2}+C\int_{\Sigma_{0}}\left|x\right|^{-3}\varphi^{2}\,d\mathscr{H}^{2}
	\ed
	where $\overline{\nu}\in S^{2}$ is any constant unit vector of unit length.
\end{lemma}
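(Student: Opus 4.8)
The plan is to start from the rearranged stability inequality \eqref{eq:rearr} with the specific choice $\varepsilon = \frac16$ and to replace the cutoff $\xi$ by a product of $\varphi$ with a carefully chosen ``almost-harmonic'' factor depending on $\Sigma_0$, in the spirit of the Schoen--Simon--Yau type logarithmic cutoff arguments but adapted to exploit the flatness of the blow-down limit. Concretely, I would insert $\xi = \varphi\,\psi$ where $\psi$ is a function of $(\overline{\nu}\cdot\nu_0)$ — the cosine of the angle between the unit normal of $\Sigma_0$ and a fixed direction $\overline{\nu}\in\mathbb{S}^2$ — or, alternatively, test with $\xi=\varphi\,\langle\overline{\nu},x\rangle/|x|$-type quantities. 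The point is that, since $\Sigma_0$ sits in $(\mathbb{R}^3,\delta)$, the quantity $1-(\overline{\nu}\cdot\nu_0)^2 = |\overline{\nu}^{\top}|^2$ (the squared norm of the tangential part of $\overline{\nu}$) satisfies a good pointwise differential identity on $\Sigma_0$: its gradient is controlled by $|A_0|$, and more precisely $|\nabla_{\Sigma_0}(\overline{\nu}\cdot\nu_0)| \le |A_0|$, so that feeding such a factor into the stability inequality produces, on the left-hand side, a term comparable to $\int |A_0|^2(\overline{\nu}\cdot\nu_0)^2\varphi^2$ while the gradient terms it generates on the right are exactly of the form $\int (1-(\overline{\nu}\cdot\nu_0)^2)|\nabla_{\Sigma_0}\varphi|^2$ plus lower-order pieces absorbable into the $|x|^{-3}$ term.

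After this substitution I expect to obtain an inequality of the shape
\[
\int_{\Sigma_0}|A_0|^2(\overline{\nu}\cdot\nu_0)^2\varphi^2\,d\mathscr{H}^2 \le C\int_{\Sigma_0}\bigl(1-(\overline{\nu}\cdot\nu_0)^2\bigr)|\nabla_{\Sigma_0}\varphi|^2\,d\mathscr{H}^2 + C\int_{\Sigma_0}|x|^{-3}\varphi^2\,d\mathscr{H}^2 + (\text{error}),
\]
valid for \emph{one} fixed $\overline{\nu}$, where the error terms come from commuting derivatives and from the ambient-versus-Euclidean comparison \eqref{eq:stdcomp}. To upgrade the left-hand side from $|A_0|^2(\overline{\nu}\cdot\nu_0)^2$ to the full $|A_0|^2$, I would run the argument for three mutually orthogonal fixed vectors $\overline{\nu}=e_1,e_2,e_3$ and sum: since $\sum_{\alpha}(\overline{\nu}_\alpha\cdot\nu_0)^2 = |\nu_0|^2 = 1$ pointwise, the summed left-hand sides give $\int |A_0|^2\varphi^2$, while $\sum_\alpha(1-(\overline{\nu}_\alpha\cdot\nu_0)^2) = 2$, so the summed first right-hand term is merely $2C\int|\nabla_{\Sigma_0}\varphi|^2$ — which is \emph{not} what the lemma claims. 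So the genuine content is that for the \emph{single} good direction one keeps the weight $1-(\overline{\nu}\cdot\nu_0)^2$, and one must verify that this weighted gradient term is what actually appears; I would double-check the bookkeeping here, since the weight is precisely what makes the later iteration (improving curvature decay to $|A(x)|\le C|x|^{-1-\alpha}$) work, given that $\Sigma_0$ is $C^1$-asymptotic to a plane and hence $1-(\overline{\nu}\cdot\nu_0)^2 = o(1)$ for $\overline{\nu}$ the limiting normal.

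The main obstacle I anticipate is controlling the error terms that arise when the weight $\psi=\psi(\overline{\nu}\cdot\nu_0)$ is differentiated: these involve $|A_0|^2\varphi^2$ again (with a small coefficient that must be absorbed into the left side, which is why the explicit choice $\varepsilon=\frac16$ matters — it leaves enough room) and cross terms $|A_0|\,|\nabla_{\Sigma_0}\varphi|\,\varphi$ handled by Cauchy--Schwarz. One must also be careful that $\psi$ and its derivatives are bounded so that no new unbounded weights appear, and that the boundary terms vanish because $\varphi$ is supported away from $\partial B_{r_0}$. A secondary technical point is that \eqref{eq:rearr} is stated for $\Sigma_0$ with the Euclidean metric but $|A_0|$ refers to the Euclidean second fundamental form, so all computations (Simons-type identities, the bound $|\nabla_{\Sigma_0}(\overline{\nu}\cdot\nu_0)|\le|A_0|$) are purely Euclidean and standard; the only place the ambient data re-enters is through the $|x|^{-3}$ potential inherited from \eqref{eq:rearr}, and one just needs that the extra errors are of the same or better order, which follows from Lemma \ref{lem:blowdown} (finite total curvature and quadratic area growth) together with \eqref{eq:stdcomp}.
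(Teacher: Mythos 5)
Your choice of test function $\xi=\varphi\,(1-(\overline{\nu}\cdot\nu_{0})^{2})^{1/2}$ is indeed the one the paper uses, but the proposal stops short of the step that actually proves the lemma, and the fallback you offer does not work (as you yourself half-observe). After substituting $\xi$ into \eqref{eq:rearr} and integrating the cross term by parts, the computation hinges on the Codazzi-type identity
\[
\Delta_{\Sigma_{0}}(\overline{\nu}\cdot\nu_{0})^{2}=2\left|\nabla_{\Sigma_{0}}\nu_{0}\cdot\overline{\nu}\right|^{2}-2\left|A_{0}\right|^{2}(\overline{\nu}\cdot\nu_{0})^{2}+2\sum_{i}\tau_{i}(H_{0})(\tau_{i}\cdot\overline{\nu})(\overline{\nu}\cdot\nu_{0}):
\]
the term $-2|A_{0}|^{2}(\overline{\nu}\cdot\nu_{0})^{2}$ is exactly what upgrades the weighted quantity $\int|A_{0}|^{2}(1-(\overline{\nu}\cdot\nu_{0})^{2})\varphi^{2}$ on the left to the full $(1-\varepsilon)\int|A_{0}|^{2}\varphi^{2}$, while the resulting bad term $\int\varphi^{2}\,|\nabla_{\Sigma_{0}}\nu_{0}\cdot\overline{\nu}|^{2}/(1-(\nu_{0}\cdot\overline{\nu})^{2})$ on the right is dominated via the pointwise inequality of Schoen--Simon (Lemma 1 of \cite{SS81}),
\[
\left|A_{0}\right|^{2}-\frac{\left|\nabla_{\Sigma_{0}}\nu_{0}\cdot\overline{\nu}\right|^{2}}{1-(\nu_{0}\cdot\overline{\nu})^{2}}\geq\frac{1}{n-1}\left|A_{0}\right|^{2}-\frac{2}{n-1}\left|A_{0}\right|\left|H_{0}\right|,
\]
which holds for a \emph{single, arbitrary} $\overline{\nu}$. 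With $n=3$ and $\varepsilon=\tfrac16$ this leaves $\tfrac13\int|A_{0}|^{2}\varphi^{2}$ on the left, the only surviving gradient term keeps the weight $1-(\overline{\nu}\cdot\nu_{0})^{2}$, and the mean-curvature errors are absorbed using the MOTS equation and the decay of $k$ (so $|H_{0}|=O(|x|^{-2})$) into the $|x|^{-3}$ potential. This pointwise inequality is the missing idea; without it the argument does not produce the full $|A_{0}|^{2}$ from one direction.

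Relatedly, your anticipated left-hand side $\int|A_{0}|^{2}(\overline{\nu}\cdot\nu_{0})^{2}\varphi^{2}$ is not the quantity that emerges, and the repair you suggest --- summing over an orthonormal frame $e_{1},e_{2},e_{3}$ --- provably cannot yield the lemma: since $\sum_{\alpha}(1-(e_{\alpha}\cdot\nu_{0})^{2})=2$, the summed right-hand side is $C\int|\nabla_{\Sigma_{0}}\varphi|^{2}$ with no weight, which is strictly weaker than the stated conclusion and useless for the iteration in Lemma \ref{deca}, where the weight is converted (choosing $\overline{\nu}$ to be the normalized average of $\nu_{0}$ on an annulus) into $\int_{\Sigma_{0}\cap(B_{2\sigma}\setminus B_{\sigma})}|\overline{\nu}-\nu_{0}|^{2}$ and then beaten down by a Poincar\'e--Wirtinger inequality. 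So the gap is genuine: the core mechanism (integration by parts plus the Schoen--Simon pointwise estimate for a single direction) is absent from the proposal, and the alternative route sketched in its place fails.
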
 

\begin{proof}
	As a first step, let us set $\xi=\varphi\left(1-\left(\overline{\nu}\cdot\nu_{0}\right)^{2}\right)^{1/2}$ in the stability inequality \eqref{eq:rearr}, with $\varphi\in\mathcal{C}^{1}$ and compactly supported away from $\partial\Sigma_{0}$: such function $\xi$ is not $\mathcal{C}^{1}$ itself, but locally Lipschitz (hence $\mathscr{H}^{2}-$a.e. differentiable) with $\left|\nabla_{\Sigma_{0}}\xi\right|\leq \left|A_{0}\right|$ and a standard approximation argument justifies its use in \eqref{eq:rearr}. \textsl{In this whole proof we will use $\nabla$ in place of $\nabla_{\Sigma_{0}}$ and $\Delta$ in place of $\Delta_{\Sigma_{0}}$ in order to make the estimates more readable}.
	Correspondingly, expanding all terms on the right-hand side we get
	
	\begin{align*}
	(1-\rho) \int_{\Sigma_{0}}\left|A_{0}\right|^{2}\left(1-\left(\overline{\nu}\cdot\nu_{0}\right)^{2}\right)\varphi^{2}\,d\mathscr{H}^{2}&\leq\int_{\Sigma_{0}}\left[\left(1-\left(\overline{\nu}\cdot\nu_{0}\right)^{2}\right)\left|\nabla\varphi\right|^{2}+\varphi^{2}\left|\nabla\left(1-\left(\overline{\nu}\cdot\nu_{0}\right)^{2}\right)^{1/2}\right|^{2}\right]\,d\mathscr{H}^{2} \\
	& +2\int_{\Sigma_{0}}\varphi\left(1-\left(\overline{\nu}\cdot\nu_{0}\right)^{2}\right)^{1/2}\nabla\varphi\cdot\nabla\left(1-\left(\overline{\nu}\cdot\nu_{0}\right)^{2}\right)^{1/2}\,d\mathscr{H}^{2} \\
	&+C\int_{\Sigma_{0}}\left|x\right|^{-3}\varphi^{2}\,d\mathscr{H}^{2}.
	\end{align*}
	At that stage, we need to write the second summand on the right-hand side in a more useful way. Integration by parts gives
	\[\int_{\Sigma_{0}}\nabla\varphi^{2}\cdot\nabla\left(1-(\overline{\nu}\cdot\nu_{0})^{2}\right)\,d\mathscr{H}^{2}=\int_{\Sigma}\varphi^{2}\Delta(\overline{\nu}\cdot\nu_{0})^{2}\,d\mathscr{H}^{2}
	\]
	and since (due to the Codazzi equation)
	\[ \Delta(\overline{\nu}\cdot\nu_{0})^{2}=2\left|\nabla\nu_{0}\cdot\overline{\nu}\right|^{2}-2|A_{0}|^{2}(\overline{\nu}\cdot\nu_{0})^{2}+2\sum_{i}\tau_i(H_{0})(\tau_{i}\cdot\overline{\nu})(\overline{\nu}\cdot\nu_{0})
	\]
	we come up with the functional inequality
	\begin{align*}
	(1-\rho)\int_{\Sigma_{0}}|A_{0}|^{2}\varphi^{2}\,d\mathscr{H}^{2} & \leq C\int_{\Sigma_{0}}(1-(\overline{\nu}\cdot\nu_{0})^{2})|\nabla\varphi|^{2}\,d\mathscr{H}^{2} \\
	& +\int_{\Sigma_{0}}\varphi^{2}\left(|\nabla\nu_{0}\cdot\overline{\nu}|^{2}+\left|\nabla\left(1-\left(\overline{\nu}\cdot\nu_{0}\right)^{2}\right)^{1/2}\right|^{2}\right)\,d\mathscr{H}^{2}\\
	&+\int_{\Sigma_{0}}\varphi^{2}\sum_{i}\tau_i (H_{0})(\tau_{i}\cdot\overline{\nu})(\overline{\nu}\cdot\nu_{0})\,d\mathscr{H}^{2} +C\int_{\Sigma_{0}}|x|^{-3}\varphi^{2}\,d\mathscr{H}^{2}.
	\end{align*}
	Notice that here and above $\left\{\tau_{i}\right\}$ is just a local orthonormal basis for the tangent space to $\Sigma_{0}$. Now, since in fact
	\[
	|\nabla\nu_{0}\cdot\overline{\nu}|^{2}+\left|\nabla\left(1-\left(\overline{\nu}\cdot\nu_{0}\right)^{2}\right)^{1/2}\right|^{2}=\frac{|\nabla\nu_{0}\cdot\overline{\nu}|^{2}}{1-(\nu_{0}\cdot\overline{\nu})^{2}}
	\]
	we can follow, almost verbatim, the proof of Lemma 1 in \cite{SS81} in order to get the pointwise geometric inequality
	\[ |A_{0}|^{2}-\frac{|\nabla\nu_{0}\cdot\overline{\nu}|^{2}}{1-(\nu_{0}\cdot\overline{\nu})^{2}}\geq\frac{1}{n-1}|A_{0}|^{2}-\frac{2}{n-1}|A_{0}||H_{0}|
	\]
	(that in our case we specifiy with $n=3$) which implies
	\begin{align*}\frac{1}{3}\int_{\Sigma_{0}}|A_{0}|^{2}\varphi^{2}\,d\mathscr{H}^{2} & \leq C\int_{\Sigma_{0}}(1-(\overline{\nu}\cdot\nu_{0})^{2})|\nabla\varphi|^{2}\,d\mathscr{H}^{2} \\
	& +\int_{\Sigma_{0}}\varphi^{2}\sum_{i}\tau_i(H_{0})(\tau_{i}\cdot\overline{\nu})(\overline{\nu}\cdot\nu_{0})\,d\mathscr{H}^{2}+\int_{\Sigma_{0}}|A_{0}||H_{0}|\,d\mathscr{H}^{2} \\
	& +C\int_{\Sigma_{0}}|x|^{-3}\varphi^{2}\,d\mathscr{H}^{2}.
	\end{align*} 
	Integrating by parts the second summand on the right-hand side of the previous inequality and applying the usual algebraic manipulations to absorb the terms involving the second fundamental form $A_{0}$ on the left-hand side (exploiting the MOTS equation and equation \eqref{eq:stdcomp} to handle the summands of the form $\int_{\Sigma_0}|H_0|^2\,d\mathscr{H}^2$) we conclude
	\bd
	\int_{\Sigma_{0}}\left|A_{0}\right|^{2}\varphi^{2}\,d\mathscr{H}^{2}\leq C\int_{\Sigma_{0}}\left(1-\left(\overline{\nu}\cdot\nu_{0}\right)^{2}\right)\left|\nabla_{\Sigma_{0}}\varphi\right|^{2}\,d\mathscr{H}^{2}+C\int_{\Sigma_{0}}\left|x\right|^{-3}\varphi^{2}\,d\mathscr{H}^{2}
	\ed
	which is precisely the inequality we were supposed to prove.
\end{proof}

As this point, the strategy is to combine this improved inequality with a Poincar\'e-type inequality in order to prove that the outer total curvature $\int_{\Sigma_{0}\setminus B_{\sigma}}|A_{0}|^{2}\,d\mathscr{H}^{2}$ decays like a negative power of $\sigma$ as we let $\sigma$ go to infinity. At that stage, this integral estimate will be turned into a pointwise estimate by means of the De Giorgi lemma.

As a preliminary remark, we observe that the improved decay estimate $|A_0(x)|\leq o(1)|x|^{-1}$ (which follows from the proof of Lemma \ref{lem:blowdown} together with \eqref{eq:stdcomp})  implies via a standard graphicality lemma (as in Chapter 2 of \cite{CM11}) that for any $\sigma$ large enough $\Sigma_0$ can be described, in the Euclidean annulus of radii $\sigma$ and $2\sigma$ as a graph over a coordinate plane. Specifically, for any such $\sigma$ and there exists a plane $\Pi=\Pi^{(\sigma)}$ in coordinates $\left\{x\right\}$ and a suitably smooth function $v=v^{(\sigma)}:\Pi\to\mathbb{R}$ whose graph coincides with $\Sigma_0$ in the aforementioned ambient annulus.

\begin{lemma}\label{deca} Let $\Sigma_{0}\hookrightarrow\left(\R^{3},\d\right)$ be as above. Then there exist constants $\alpha>0$ and $C$ such that
	\bd
	J(\sigma)=\int_{\Sigma_{0}\setminus B_{\sigma}}\left|A_{0}\right|^{2}\,d\mathscr{H}^{2}\leq C \sigma^{-2\alpha}.
	\ed
\end{lemma}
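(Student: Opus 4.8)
The plan is to run an iteration (or ``hole-filling'') argument on the quantity $J(\sigma)=\int_{\Sigma_0\setminus B_\sigma}|A_0|^2\,d\mathscr{H}^2$, feeding the refined stability inequality of Lemma \ref{lem:key} with a carefully chosen cutoff and then exploiting graphicality to control the coefficient $1-(\overline{\nu}\cdot\nu_0)^2$ on the right-hand side. Concretely, first I would fix a large dyadic scale $\sigma$ and, using the graphical description of $\Sigma_0$ over a plane $\Pi=\Pi^{(\sigma)}$ in the annulus $B_{2\sigma}\setminus B_\sigma$, choose $\overline{\nu}$ to be the \emph{constant} unit normal to that plane. With this choice, on the annular piece one has $1-(\overline{\nu}\cdot\nu_0)^2=|\nabla_{\Sigma_0}u|^2/(1+|\nabla u|^2)\leq |\nabla v^{(\sigma)}|^2$, and the improved decay $|A_0(x)|=o(1)|x|^{-1}$ from Lemma \ref{lem:blowdown} together with the standard graphicality lemma in \cite{CM11} gives $|\nabla v^{(\sigma)}|=o(1)$ uniformly on that annulus. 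Thus the factor multiplying $|\nabla_{\Sigma_0}\varphi|^2$ in Lemma \ref{lem:key} is a small constant $\epsilon(\sigma)\to 0$, rather than just $O(1)$.

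Next I would insert a logarithmic-type cutoff: take $\varphi$ supported in $\Sigma_0\setminus B_\sigma$, equal to $1$ on $\Sigma_0\setminus B_{2\sigma}$, and interpolating across the annulus $B_{2\sigma}\setminus B_\sigma$ with $|\nabla_{\Sigma_0}\varphi|\leq C/\sigma$. Plugging into Lemma \ref{lem:key} and using the quadratic area growth of $\Sigma_0$ (again Lemma \ref{lem:blowdown}) to estimate $\mathscr{H}^2(\Sigma_0\cap(B_{2\sigma}\setminus B_\sigma))\leq C\sigma^2$, the first term on the right contributes $\epsilon(\sigma)\cdot C\sigma^{-2}\cdot C\sigma^2=C\epsilon(\sigma)$, while the weighted term $\int_{\Sigma_0}|x|^{-3}\varphi^2$ is, by the coarea formula and quadratic area growth, bounded by $C\sigma^{-1}$. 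This already shows $J(2\sigma)\leq C(\epsilon(\sigma)+\sigma^{-1})\to 0$; but to get a \emph{power} decay one must be more careful and set up a genuine differential-inequality/iteration scheme rather than a one-shot estimate.

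For the power decay, the key is to bound $\epsilon(\sigma)$ itself by (a power of) $J$. Indeed, by the usual $\varepsilon$-regularity / De Giorgi-type estimate for stable minimal surfaces, the pointwise bound $\sup_{\Sigma_0\cap(B_{2\sigma}\setminus B_\sigma)}|x|^2|A_0|^2$ is controlled by the scale-invariant total curvature $\int_{\Sigma_0\cap(B_{4\sigma}\setminus B_{\sigma/2})}|A_0|^2$, hence by $J(\sigma/2)$; and the tilt $1-(\overline{\nu}\cdot\nu_0)^2$ on the annulus is in turn controlled by integrating $|A_0|$ along $\Sigma_0$ from the boundary, giving (via Cauchy--Schwarz and quadratic area growth) $\epsilon(\sigma)\leq C\,J(\sigma/2)$ up to harmless lower-order terms. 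Substituting back yields an inequality of the schematic form $J(2\sigma)\leq C\,J(\sigma/2)\,\big(\text{bounded}\big)+C\sigma^{-1}$, which is not yet contractive; the genuine gain comes from running the cutoff estimate on a \emph{family} of annuli and summing, i.e. from the hole-filling trick $J(2\sigma)\leq \theta\,J(\sigma)+C\sigma^{-1}$ with $\theta<1$ (obtained because the ``bad'' annulus contribution can be made a definite fraction of the total by choosing the interpolation region large in a multiplicative sense and absorbing). Iterating $J(2^m\sigma_0)\leq \theta^m J(\sigma_0)+C\sum_{j<m}\theta^{m-1-j}2^{-j}\sigma_0^{-1}$ gives geometric decay $J(\sigma)\leq C\sigma^{-2\alpha}$ with $2\alpha=\min(1,\log_2(1/\theta))$, which is the claim.

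I expect the main obstacle to be making the hole-filling genuinely contractive: a naive cutoff only gives $J(2\sigma)\leq C(\epsilon(\sigma)+\sigma^{-1})$ with no smallness in the constant, so one must simultaneously (i) exploit that $\epsilon(\sigma)=1-(\overline{\nu}\cdot\nu_0)^2$ on the annulus is quantitatively tied to $J$ at a comparable scale through $\varepsilon$-regularity, and (ii) arrange the cutoff over many sub-annuli so that the derivative term's total contribution is a fixed fraction $\theta<1$ of $J(\sigma)$ itself, allowing the rest to be absorbed on the left. Keeping track of the interplay between the Euclidean comparison \eqref{eq:stdcomp}, the quadratic area growth, and the change of base plane $\Pi^{(\sigma)}$ from scale to scale (so that $\overline{\nu}$ is genuinely well-adapted on each dyadic annulus) is where the argument is delicate; once the contractive inequality is in hand, the iteration and the conversion of the integral decay into the pointwise bound $|A_0(x)|\leq C|x|^{-1-\alpha}$ via the De Giorgi lemma are routine, and they set up the final expansion in Proposition \ref{pro:expinf}.
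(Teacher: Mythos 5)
Your overall architecture (the refined inequality of Lemma \ref{lem:key} with an annular cutoff, a contractive inequality $J(2\sigma)\leq\theta J(\sigma)+\xi\sigma^{-1}$, and then the iteration lemma) matches the paper, and you correctly diagnose that the whole difficulty is producing $\theta<1$. But the mechanism you propose for the contraction does not work, and the idea that actually closes the argument is missing. Your route bounds the tilt $1-(\overline{\nu}\cdot\nu_{0})^{2}$ on the annulus in \emph{sup norm}, either by $o(1)$ (which gives $J(2\sigma)\leq C(\epsilon(\sigma)+\sigma^{-1})$, with no decay rate) or by $C\,J(\sigma/2)$ via $\varepsilon$-regularity (which gives $J(2\sigma)\leq C\,J(\sigma/2)+C\sigma^{-1}$ with an uncontrolled constant $C$, useless as you note). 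The fix you then sketch --- making the interpolation region ``large in a multiplicative sense'' so the annulus contribution is ``a definite fraction of the total'' --- does not produce a contraction: with a logarithmic cutoff over $B_{\Lambda\sigma}\setminus B_{\sigma}$ the derivative term is bounded by an absolute constant times $1/\log\Lambda$ or by $J$ at a \emph{larger} scale, and in neither case do you obtain a coefficient strictly less than one in front of $J(\sigma)$.

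The missing idea is to keep the estimate \emph{localized to the annulus} and in \emph{integral} form. The paper chooses $\overline{\nu}$ to be the normalized average $\nu_{0}^{(\sigma)}/|\nu_{0}^{(\sigma)}|$ of the Gauss map over $\Sigma_{0}\cap(B_{2\sigma}\setminus B_{\sigma})$ (not the normal to the reference plane $\Pi^{(\sigma)}$), bounds $1-(\overline{\nu}\cdot\nu_{0})^{2}\leq|\overline{\nu}-\nu_{0}|^{2}\leq 4|\nu_{0}-\nu_{0}^{(\sigma)}|^{2}$, and then applies the Poincar\'e--Wirtinger inequality on the graphical annulus (pushing forward by the projection onto $\Pi^{(\sigma)}$, with Jacobian uniformly controlled) to get
$\sigma^{-2}\int_{\Sigma_{0}\cap(B_{2\sigma}\setminus B_{\sigma})}|\nu_{0}-\nu_{0}^{(\sigma)}|^{2}\,d\mathscr{H}^{2}\leq C\int_{\Sigma_{0}\cap(B_{2\sigma}\setminus B_{\sigma})}|A_{0}|^{2}\,d\mathscr{H}^{2}=C\bigl(J(\sigma)-J(2\sigma)\bigr)$.
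Feeding this into the cutoff estimate yields $J(2\sigma)\leq C\bigl(J(\sigma)-J(2\sigma)\bigr)+C\sigma^{-1}$, and the hole-filling is now automatic: moving $C\,J(2\sigma)$ to the left gives $\theta=C/(1+C)<1$ regardless of how large $C$ is. No smallness of any constant, no $\varepsilon$-regularity at a larger scale, and no summation over sub-annuli is needed; the contraction comes entirely from the fact that the right-hand side sees only the curvature \emph{in the transition annulus}, i.e. the difference $J(\sigma)-J(2\sigma)$.
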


\begin{proof}
	Thanks to Lemma A.1 in \cite{Car13} it suffices to show that there exist two constants $\theta\in (0,1)$ and $\xi>0$ such that
	\[ J(2\sigma)\leq\theta J(\sigma)+\xi\sigma^{-1}.
	\]
	Given any $\sigma> r_{0}$, we would like to consider the improved stability inequality (Lemma \ref{lem:key}) with $\varphi_{\sigma}$ a $\mathcal{C}^{1}$ function which vanishes in $B_{\sigma}$, is equal to one outside of $B_{2\sigma}$ and increases linearly for $\sigma\leq r\leq 2\sigma$. Obviously, any such function is not compactly supported, yet this choice can easily be justified considering a suitable sequence of functions monotonically increasing to $\varphi_{\sigma}$ and applying, once again, a logarithmic cut-off trick. This strongly makes use of the conclusion we got in Lemma \ref{lem:blowdown}, namely quadratic area growth and finiteness of the total curvature. The details are rather standard and we omit them here.
	As a result, we obtain
	\bd
	\int_{\Sigma_{0}\setminus{B_{2\sigma}}}\left|A_{0}\right|^{2}\,d\mathscr{H}^{2}\leq 2C\sigma^{-2}\int_{\Sigma_{0}\cap\left(B_{2\sigma}\setminus B_{\sigma}\right)}\left(1-\left(\overline{\nu}\cdot\nu_{0}\right)^{2}\right)\,d\mathscr{H}^{2}+C\sigma^{-1}
	\ed
	and our strategy now is to combine it with a Poincar\'e-Wirtinger inequality.
	
	We can find an upper bound on the first term on the right-hand side as follows: since trivially
	\bd
	1-\left(\overline{\nu}\cdot\nu_{0}\right)^{2}=\frac{1}{4}\left|\overline{\nu}-\nu_{0}\right|^{2}\left|\overline{\nu}+\nu_{0}\right|^{2}\leq\left|\overline{\nu}-\nu_{0}\right|^{2}
	\ed
	we have
	\bd\sigma^{-2}\int_{\Sigma_{0}\cap\left(B_{2\sigma}\setminus B_{\sigma}\right)}\left(1-\left(\overline{\nu}\cdot\nu_{0}\right)^{2}\right)\,d\mathscr{H}^{2}\leq
	\sigma^{-2}\int_{\Sigma_{0}\cap\left(B_{2\sigma}\setminus B_{\sigma}\right)}\left|\overline{\nu}-\nu_{0}\right|^{2}\,d\mathscr{H}^{2}.
	\ed
	 Let then $\nu^{(\sigma)}_{0}$ be the average of $\nu_{0}$ on such $\Sigma_0\cap B_{2\sigma}\setminus B_{\sigma}$: clearly $\nu^{(\sigma)}_{0}$ does not need to be a unit vector in general, but still the following pointwise inequality holds
	\bd
	\left|\nu_{0}-\frac{\nu^{(\sigma)}_{0}}{\left|\nu^{(\sigma)}_{0}\right|}\right|\leq \left|\nu_{0}-\nu^{(\sigma)}_{0}\right|+\left|\nu^{(\sigma)}_{0}-\frac{\nu^{(\sigma)}_{0}}{\left|\nu^{(\sigma)}_{0}\right|}\right|\leq 2\left|\nu_{0}-\nu^{(\sigma)}_{0}\right|
	\ed
	and therefore (letting $\overline{\nu}=\nu^{(\sigma)}_{0}/|\nu^{(\sigma)}_{0}|$)
	\bd
	\sigma^{-2}\int_{\Sigma_0\cap B_{2\sigma}\setminus B_{\sigma}}\left|\overline{\nu}-\nu_{0}\right|^{2}\,d\mathscr{H}^{2}\leq 4\sigma^{-2}\int_{\Sigma_0\cap B_{2\sigma}\setminus B_{\sigma}}\left|\nu_{0}-\nu^{(\sigma)}_{0}\right|^{2}\,d\mathscr{H}^{2}.
	\ed
	By the Area Formula we can rewrite the previous integral as
	\bd
	\int_{\Sigma_0\cap B_{2\sigma}\setminus B_{\sigma}}\left|\nu_{0}-\nu^{(\sigma)}_{0}\right|^{2}\,d\mathscr{H}^{2}=\int_{\textrm{proj}\left(\Sigma_0\cap B_{2\sigma}\setminus B_\sigma\right)}\left|\nu_{0}\circ v-\nu^{(\sigma)}_{0}\right|^{2}\textrm{Jac}(v)\,d\mathscr{L}^{2}
	\ed
	 where $\textrm{proj}:\Sigma_0\to\Pi$ is the Euclidean orthogonal projection and clearly for the Jacobian $\textrm{Jac}\left(v\right)=\sqrt{1+\left|\nabla v\right|^{2}}$. Thanks to the locally uniform bounds for these graphical components (again: as in Chapter 2 of \cite{CM11}), one easily gets that
	\bd
	\int_{\textrm{proj}\left(\Sigma_0\cap B_{2\sigma}\setminus B_{\sigma}\right)}\left|\nu_{0}\circ v-\nu^{(\sigma)}_{0}\right|^{2}\textrm{Jac}(v)\,d\mathscr{L}^{2}\leq C\int_{\textrm{proj}\left(\Sigma_0\cap B_{2\sigma}\setminus B_\sigma\right)}\left|\nu_{0}\circ v-\nu^{(\sigma)}_{0}\right|^{2}\,d\mathscr{L}^{2}
	\ed
	\bd\leq C\sigma^{2} \int_{\textrm{proj}\left(\Sigma_0\cap B_{2\sigma}\setminus B_\sigma\right)}\left|\nabla_{\Sigma_{0}}\nu_{0}\circ v\right|^{2}\,d\mathscr{L}^{2}
\leq C\sigma^2\int_{\Sigma_0\cap B_{2\sigma}\setminus B_{\sigma}}\left|A_{0}\right|^{2}\,d\mathscr{H}^{2}
	\ed
	and hence combining all these equations we come to the final estimate
	\bd
	\int_{\Sigma_{0}\setminus{B_{2\sigma}}}\left|A_{0}\right|^{2}\,d\mathscr{H}^{2}\leq C \int_{\Sigma_{0}\cap B_{2\sigma}\setminus B_{\sigma}}\left|A_{0}\right|^{2}\,d\mathscr{H}^{2}+C\sigma^{-1}.
	\ed
	This is nothing but
	\bd
	J(2\sigma)\leq C\left(J\left(\sigma\right)-J\left(2\sigma\right)\right)+C\sigma^{-1}
	\ed
	or, equivalently
	\bd
	J(2\sigma)\leq\frac{C}{1+C}J(\sigma)+\frac{C}{1+C}\sigma^{-1}
	\ed
	and our claim follows by setting $\theta=\xi=C\left(1+C\right)^{-1}$.
\end{proof}

At this point, we want to turn the previous integral estimate into an improved pointwise estimate. To that aim, we need an adaptation of one basic fact of the De Giorgi-Nash theory, the subsolution estimate, which is discussed in Appendix \ref{sec:dgn}. In order to apply Proposition \ref{dgn}, we also recall in Appendix \ref{sec:sim} a general Simons' type inequality for surfaces in $(\mathbb{R}^{3},\delta)$.

We shall now make use of these results in order to complete the proof of Proposition \ref{pro:expinf}.

\begin{proof}
	Thanks to Lemma \ref{simons}, the MOTS equation and the Schoen-type decay estimate by Andersson-Metzger we get
	\bd
	\D_{\Sigma_{0}}\left|A_{0}(x)\right|^{2}\geq -\frac{C}{\left|x\right|^{5}}-6\left|A_{0}(x)\right|^{4}, \ \ \forall \ \left|x\right|=r>2 r_{\ast}
	\ed
	It follows at once, by trivial manipulations, that one can choose a \textsl{positive} constant $\overline{C}>0$ \textsl{independent of $\overline{x}$} (and where $\overline{r}=\left|\overline{x}\right|/2$) such that the function
	\bd
	u=\overline{r}^{-5/2}+\left|A_{0}\right|^{2}
	\ed
	satisfies a functional inequality of the form \eqref{ellpb}, specifically (for any $\varepsilon>0$)
	\bd
	\D_{\Sigma_{0}}u\geq -\overline{C}\left(\overline{r}^{-5/2}+\left|A_{0}\right|^{2}\right)u, \ x\in B_{\overline{r}}\left(\overline{x}\right).
	\ed
	As a result, we are in position to apply our De Giorgi-Nash inequality, Proposition \ref{dgn}, to the function $u$, for $p=1$ and $\theta=1/2$ thus obtaining (via the integral estimate Lemma \ref{deca}) for $\overline{r}>r_{\ast}$
	\bd
	\sup_{B^{\Sigma_{0}}_{\overline{r}/2}\left(\overline{x}\right)}\left|A_{0}\right|\leq C\left(\frac{1}{\overline{r}^{2}}\int_{B_{\overline{r}}\left(\overline{x}\right)}\left(\overline{r}^{-5/2}+\left|A_{0}\right|^{2}\right)\,d\mathscr{H}^{2}\right)^{1/2}\leq  C \overline{r}^{-1-\min\left\{1/4, \alpha\right\}}.
	\ed
	
	The crucial remark here is that the constant $C$ in the final estimate can be chosen independently of $\overline{x}$ (to greater extend of $\overline{r}$) because (based on the statement of Theorem \ref{dgn}) the quantity
	\bd
	\overline{r}^{2\left(1-\frac{1}{1+\e}\right)}\left(\int_{B^{\Sigma}_{\overline{r}}\left(\overline{x}\right)}\left(\overline{r}^{-5/2}+\left|A_{0}\right|^{2}\right)^{1+\e}\,d\mathscr{H}^{2}\right)^{1/\left(1+\e\right)}
	\ed
	is uniformly bounded as we let $\overline{r}\to \infty$, this being true in fact for \textsl{any} positive value of $\e$.
	As a result we conclude that
	\bd
	\sup_{B^{\Sigma_{0}}_{\overline{r}/2}\left(\overline{x}\right)}\left|A_{0}\right|\leq C\overline{r}^{-1-\alpha'}, \ \alpha'=\min\left\{1/4, \alpha\right\}
	\ed
	and therefore, as a special case
	\bd
	\left|A_{0}\left(\overline{x}\right)\right|\leq C\left|\overline{x}\right|^{-1-\alpha'}, \ \forall \ \overline{x}\in\Sigma\setminus B_{2r_{\ast}}.
	\ed

This improved decay estimate on the second fundamental form implies at once (due to its radial integrability) that the tangent cone to $\Sigma_{0}$ at infinity is unique and hence, possibly taking a smooth extension inside a compact set, we can assume that there exist $\Pi$ and $u\in\mathcal{C}^{2}\left(\Pi;\R\right)$ such that $\Sigma_{0}$ coincides with its graph, at least outside of a suitably large ball.
Furthermore, we have that 
\[
\nabla u(x')=O(\left|x'\right|^{-\alpha'}), \ \ \nabla\nabla u(x')=O(\left|x'\right|^{-1-\alpha'}).
\]
 At this stage a bootstrap argument (along the lines of the one presented in Appendix A of \cite{Car13}) based on linear PDE theory in $\mathbb{R}^{2}$ allows to improve the decay rate, hereby completing the proof. 
\end{proof}

Lastly, we are now in position to give the proof of Theorem 2 and deduce that an initial data set having boosted harmonic asymptotics and containing a properly embedded stable MOTS must isometrically embed in $(\mathbb{M},\eta)$ as a space-like slice. 

\begin{proof}

Because of the stability comparison theorem by Galloway-Schoen (Proposition \ref{pro:stabCOMP}) we know that for any test function $\phi\in\mathcal{W}^{1,2}(\Sigma)$ the following functional inequality is satisfied:
\bd\int_{\Sigma}\left[\mu+J(\nu)+\frac{1}{2}\left|\chi\right|^{2}\right]\phi^{2}\,d\mathscr{H}^{2}\leq\int_{\Sigma}|\nabla_{\Sigma}\phi|^{2}\,d\mathscr{H}^{2}+\int_{\Sigma}K\phi^{2}\,d\mathscr{H}^{2}.
\ed
The conclusion of Proposition \ref{pro:expinf}, concerning the structure at infinity of $\Sigma$, together with the well-known result by Shiohama \cite{Shi85} concerning the Gauss-Bonnet theorem for open manifolds give that 
\bd\int_{\Sigma}K=2\pi\left[\chi(\Sigma)-N'\right]
\ed
for $N'$ the total number of ends of $\Sigma$. Applying the logarithmic cut-off trick to our inequality (which is legitimate because of Lemma \ref{lem:blowdown}) and combining it with the previous equation, we must conclude that $\Sigma\simeq\mathbb{R}^{2}$, and that $\mu+J(\nu)=0, \ \chi=0$ identically on $\Sigma$. That being said, one can follow once more the argument by Fischer-Colbrie and Schoen to get to the conclusion that $\Sigma$ has to be intrinsecally flat, namely its Gauss curvature is zero at every point.

\
Let us denote by $y^{1}, y^{2}$ Euclidean coordinates on $\Pi$ and let them be completed to an asymptotically flat set of coordinates $\left\{y\right\}$ for $\R^{3}$. Also, let $C\in SO(3)$ be the (Euclidean) isometry relating $\left\{x\right\}$ and $\left\{y\right\}$, so that the tangent vectors to $\Sigma$ have $\left\{x\right\}$-coordinates given by $\left(w_{l}\right)^{i}=c^{i}_{j}\left(v_{l}\right)^{j}$ for $l=1,2$
where clearly
\begin{equation*}
v_{1}=\begin{pmatrix} 1 \\
0 \\
\partial_{y^{1}}u
 \end{pmatrix}, \ \
v_{2}=\begin{pmatrix} 0 \\
1 \\
\partial_{y^{2}}u
 \end{pmatrix}. \ \
\end{equation*}

The metric $\overline{g}$ induced on $\Sigma$ by the ambient metric $g$ has, in terms of the matrix $C$ (and using the decay properties of $u$) an asymptotic expansion of the form
\bd
g\left(v_{i}, v_{j}\right)=\overline{g}_{ij}=\d_{ij}+\frac{\mathcal{K}}{r(y)}\omega_{ij}+O_{2}(\left|r(y)\right|^{-2+2\varepsilon})
\ed 
with $r^{2}(y)=\sum_{i=1}^{2}\z_{i}^{2}\left(y^{i}\right)^{2}+ \z_{3}^{2}u^{2}(y)$, $\omega_{ij}=\sum_{l=1}^{3}\left(c_{i}^{l}\right)\left(c_{j}^{l}\right)\b_{l}^{2}$ and $\varepsilon\in (0,1/2)$ is a constant that we can take as small as we wish. In particular, let us emphasize that $\omega_{ii}>0$ for any choice of the index $i$ due to the fact that $C\in SO(3)$.
We can easily determine the Christhoffel symbols of $\overline{g}$ 
\bd
\overline{\Gamma}_{ij}^{k}=-\frac{\mathcal{K}}{2}\sum_{p=1}^{2}\d^{kp}\left(\frac{\omega_{ip}\z_{j}^{2}y^{j}+\omega_{jp}\z_{i}^{2}y^{i}-\omega_{ij}\z_{p}^{2}y^{p}}{r^{3}(y)}\right)+O_{1}(\left|r(y)\right|^{-3+2\varepsilon})
\ed
and thus one can differentiate further and get
\begin{align*}
\overline{\Gamma}_{ij,l}^{k} & =-\frac{\mathcal{K}}{2}\sum_{p=1}^{2}\d^{kp}\left[\frac{\omega_{ip}\z_{j}^{2}\d^{j}_{l}+\omega_{jp}\z_{i}^{2}\d^{i}_{l}-\omega_{ij}\z_{p}^{2}\d^{p}_{l}}{r^{3}\left(y\right)}-3\frac{\omega_{ip}\z^{2}_{j}\z^{2}_{l}y^{j}y^{l}+\omega_{jp}\z^{2}_{i}\z^{2}_{l}y^{i}y^{l}-\omega_{ij}\z^{2}_{p}\z^{2}_{l}y^{p}y^{l}}{r^{5}(y)}\right] \\ 
& +O\left(\left|r(y)\right|^{-4+2\varepsilon}\right).
\end{align*}
It follows that the expression of the scalar curvature of $\Sigma$ is given, in these coordinates, by
\bd
R_{\Sigma}=\overline{g}^{ij}\left(\Gamma_{ij,k}^{k}-\Gamma_{ik,j}^{k}\right)+O(\left|r(y)\right|^{-4+2\varepsilon})
\ed
\bd
=-\mathcal{K}\sum_{i\neq k}\left[-\frac{\omega_{ii}\z_{k}^{2}}{r^{3}\left(y\right)}-3\frac{\omega_{ik}\z^{2}_{i}\z^{2}_{k}y^{i}y^{k}-\omega_{ii}\z_{k}^{4}(y^{k})^{2}}{r^{5}\left(y\right)}\right]+O(\left|r(y)\right|^{-4+2\varepsilon})
\ed
so that we can conveniently rewrite it in the final form
\bd
R_{\Sigma}=-\frac{\mathcal{K}}{r^{3}\left(y\right)}\left[-\omega_{11}\z_{2}^{2}-\omega_{22}\z_{1}^{2}-\frac{3}{r^{2}(y)}\left(2\omega_{12}\z_{1}^{2}\z_{2}^{2}y^{1}y^{2}-\omega_{11}\left(\z_{2}^{2}y^{2}\right)^{2}-\omega_{22}\left(\z_{1}^{2}y^{1}\right)^{2}\right)\right]+O(\left|r(y)\right|^{-4+2\varepsilon}).
\ed
Now, we know that $R_{\Sigma}$ is identically equal to zero and therefore this is true, as a special case, on the coordinate line where $y^{2}=0$: the expansion of the scalar curvature along that path is given by
\bd
R_{\Sigma}=-\frac{\mathcal{K}}{r^{3}(y)}\left[-\omega_{11}\z_{2}^{2}-\omega_{22}\z_{1}^{2}+\frac{3\omega_{22}\left(\z_{1}^{2}y^{1}\right)^{2}}{\left(\z_{1}y^{1}\right)^{2}+\left(\z_{3}u(y)\right)^{2}}\right]+O(\left|y\right|^{-4+2\varepsilon})
\ed
so \textsl{if} $\mathcal{K}$ \textsl{were not zero} by letting $|y|\to\infty$ we would get the algebraic condition $2\omega_{22}\z_{1}^{2}=\omega_{11}\z_{2}^{2}$. Considering, symmetrically, the coordinate line where $y^{1}=0$ we would be led to the system
\begin{equation*}
\begin{cases}
2\omega_{11}\z_{2}^{2}=\omega_{22}\z_{1}^{2} \\
2\omega_{22}\z_{1}^{2}=\omega_{11}\z_{2}^{2}
\end{cases}
\end{equation*}
and hence, by comparison, we would get the conclusion $\omega_{11}\z_{2}^{2}=\omega_{22}\z_{1}^{2}=0$, contradiction. Thus we necessarily have $\mathcal{K}=0$ and then (keeping in mind Definition \ref{def:BDS}) this implies that the ADM \textsl{energy} of the metric $g$ of the initial data set $(M,g,k)$ is zero and so, thanks to the rigidity statement in Theorem \ref{thm:PMT}, this forces $(M,g,k)$ to isometrically embed as a space-like slice in the Minkowski model $(\mathbb{M},\eta)$, which is what we had to prove.

\end{proof}

\appendix

\section{A De Giorgi-Nash estimate}\label{sec:dgn}

We shall state and briefly discuss here an almost immediate adaptation of a fundamental result by De Giorgi and Nash to complete surfaces in the Euclidean space that are not necessarily minimal.

\begin{proposition}\label{dgn}
	Let $\Sigma_{0}$ be as in Section \ref{sec:embedding}.
	For $x_{0}\in\Sigma_{0}$ and $r>2r_{0}$, let $B_{r}^{\Sigma_{0}}\left(x_{0}\right)$ be the \textsl{intrinsic} ball of center $x_{0}$ with $\left|x_{0}\right|=2r$ and radius $r$ on $\Sigma_{0}$.
	Suppose $u\in \mathcal{W}^{1,2}\left(B_{r}^{\Sigma_{0}}\left(x_{0}\right)\right)$ is non-negative, locally bounded and weakly satisfies
	\be\label{ellpb}
	\Delta_{\Sigma_{0}}u+au\geq 0
	\ee 
	and that there exists $\e>0$ such that
	\bd
	r^{2-\frac{2}{1+\e}}\left\|a\right\|_{L^{1+\e}\left(B_{r}^{\Sigma_{0}}\left(x_{0}\right)\right)}\leq C'.
	\ed
	Then there exists $r_{\ast}$ such that when $r>r_{\ast}$ the following statement holds: for every $\theta\in\left(0,1\right)$ and $p>0$ there exists a constant $C$ such that
	\bd
	\sup_{B_{\theta r\left(x_{0}\right)}^{\Sigma_{0}}} u\leq C\left(r^{-2}\int_{B^{\Sigma_{0}}_{r}\left(x_{0}\right)}u^{p}\,d\mathscr{H}^{2}\right)^{1/p}
	\ed
	where $C=C\left(\theta,p,C'\right)$.
\end{proposition}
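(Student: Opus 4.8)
\emph{Proof (plan).} The statement is a scale‑invariant local boundedness ($L^\infty$–$L^p$) estimate for nonnegative subsolutions of the Schrödinger‑type inequality $\Delta_{\Sigma_0}u+au\ge 0$, and the plan is to derive it by a Moser iteration adapted to the geometry of $\Sigma_0$ at large scales. The one ingredient that is not completely routine is a Sobolev inequality on the intrinsic balls $B^{\Sigma_0}_\rho(\check x)\subset B^{\Sigma_0}_r(x_0)$ whose constant does \emph{not} degenerate as $r\to\infty$ nor as $x_0$ ranges over a coordinate sphere; granting that, the rest is the classical De Giorgi–Nash–Moser scheme, the only novelty being the bookkeeping needed to keep every constant independent of $x_0$.

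\emph{Step 1: a uniform Sobolev inequality.} By Lemma \ref{lem:blowdown} together with \eqref{eq:stdcomp} one has the improved decay $|A_0(x)|\,|x|=o(1)$ and, since $\Sigma_0$ has quadratic area growth and $|x|$ is comparable to $r$ on $B^{\Sigma_0}_r(x_0)$ (because $|x_0|=2r$), also $\int_{B^{\Sigma_0}_r(x_0)}|A_0|^2\,d\mathscr{H}^2=o(1)$ as $r\to\infty$. Hence for $r$ large each such ball is, after rescaling by $1/r$, a connected graph of small $\mathcal{C}^1$ norm over a planar disk; feeding this into the Michael–Simon Sobolev inequality and absorbing the (uniformly small) mean‑curvature term gives, for every fixed $m<\infty$, every $\rho\le r$ and every $f\in\mathcal{C}^1_c(B^{\Sigma_0}_\rho(\check x))$ with $B^{\Sigma_0}_\rho(\check x)\subset B^{\Sigma_0}_r(x_0)$,
\[
\left(\rho^{-2}\int f^m\,d\mathscr{H}^2\right)^{2/m}\le C_S(m)\left(\int|\nabla_{\Sigma_0}f|^2\,d\mathscr{H}^2+\rho^{-2}\int f^2\,d\mathscr{H}^2\right),
\]
with $C_S(m)$ independent of $x_0$ and of $r$ once $r>r_\ast$ — this is where the threshold $r_\ast$ in the statement originates.

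\emph{Step 2: Caccioppoli and a reverse Hölder inequality.} Testing $\Delta_{\Sigma_0}u+au\ge 0$ against $\eta^2 u^{\gamma-1}$ ($\gamma\ge 2$, $\eta$ a Lipschitz cut‑off; admissible after the usual truncation of $u$ and passage to the limit, using that $u\ge 0$ is locally bounded) and estimating in the standard way yields
\[
\int\bigl|\nabla_{\Sigma_0}(\eta u^{\gamma/2})\bigr|^2\,d\mathscr{H}^2\le C\gamma^2\int\bigl(|\nabla_{\Sigma_0}\eta|^2+\eta^2\bigr)u^\gamma\,d\mathscr{H}^2+C\gamma^2\int a^+\eta^2u^\gamma\,d\mathscr{H}^2.
\]
The potential term is handled by Hölder's inequality with exponents $1+\e$ and $(1+\e)/\e$, namely $\int a^+\eta^2u^\gamma\le\|a\|_{L^{1+\e}(B_\rho)}\,\|\eta u^{\gamma/2}\|_{L^{2(1+\e)/\e}}^2$; since $2(1+\e)/\e$ is \emph{strictly subcritical} (one chooses $m>2(1+\e)/\e$ in Step 1), one interpolates $\|\eta u^{\gamma/2}\|_{L^{2(1+\e)/\e}}$ between $L^2$ and $L^m$, applies Young's inequality with a parameter depending on $\gamma$, and then Step 1 to the resulting $L^m$ norm. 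The hypothesis $\rho^{2-2/(1+\e)}\|a\|^{1/2}_{L^{1+\e}(B_\rho)}\le C'$ — inherited by every sub‑ball $B_\rho\subset B^{\Sigma_0}_r(x_0)$ since $\rho\le r$ and $2-2/(1+\e)>0$ — is precisely what makes the leading part of this chain absorbable into the left‑hand side, the net effect being that $\gamma$ reappears only through a fixed power $A$ and $C'$ through a fixed power. Specializing $\eta$ to interpolate between concentric balls $B^{\Sigma_0}_{\rho_2}(\check x)\subset B^{\Sigma_0}_{\rho_1}(\check x)$ with $\rho_1/2\le\rho_2<\rho_1\le r$, one arrives at
\[
\left(\rho_2^{-2}\int_{B^{\Sigma_0}_{\rho_2}(\check x)}u^{\gamma\chi}\,d\mathscr{H}^2\right)^{1/(\gamma\chi)}\le\left(\frac{C\,\gamma^{A}\,r^2}{(\rho_1-\rho_2)^2}\right)^{1/\gamma}\left(\rho_1^{-2}\int_{B^{\Sigma_0}_{\rho_1}(\check x)}u^{\gamma}\,d\mathscr{H}^2\right)^{1/\gamma},
\]
where $\chi=m/2>1$ and $A>0$, $C=C(\e,C')$ are fixed.

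\emph{Step 3: iteration and arbitrary $p$.} Iterating the last inequality with $\gamma_k=2\chi^k$ and $\rho_k=\theta r+2^{-k}(1-\theta)r$, and setting $\Phi_k=\bigl(\rho_k^{-2}\int_{B^{\Sigma_0}_{\rho_k}(x_0)}u^{\gamma_k}\,d\mathscr{H}^2\bigr)^{1/\gamma_k}$, one gets $\Phi_{k+1}\le\bigl(C(1-\theta)^{-2}\gamma_k^{A}4^{k}\bigr)^{1/\gamma_k}\Phi_k$; since $\chi>1$ one has $\sum_k k\chi^{-k}<\infty$, so the product of these factors converges and, letting $k\to\infty$,
\[
\sup_{B^{\Sigma_0}_{\theta r}(x_0)}u\le C(\theta,\e,C')\left(r^{-2}\int_{B^{\Sigma_0}_r(x_0)}u^2\,d\mathscr{H}^2\right)^{1/2},
\]
which is the assertion for $p=2$. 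For $p>2$ it follows at once from Hölder's inequality ($r^{-2}\int u^2\le(r^{-2}\int u^p)^{2/p}$ on the ball of radius comparable to $r$), while for $0<p<2$ one uses the classical absorption argument: bound $r^{-2}\int_{B_{\theta'r}}u^2\le(\sup_{B_{\theta'r}}u)^{2-p}\,r^{-2}\int_{B_{\theta'r}}u^p$, insert into the $p=2$ estimate, and iterate over a sequence $\theta'\downarrow\theta$ together with Young's inequality to absorb the supremum, obtaining the estimate with exponent $p$ and constant $C=C(\theta,p,\e,C')$.

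\emph{Main difficulty.} The genuine point is Step 1: one must know that the intrinsic balls $B^{\Sigma_0}_r(x_0)$, centred at Euclidean distance $2r$, are \emph{uniformly} regular — connected, $\mathcal{C}^1$‑close to flat disks after rescaling, with uniformly small $L^2$ mean curvature — so that the Sobolev constant is uniform in $x_0$ and in $r>r_\ast$; this is exactly what the finite total curvature, quadratic area growth and improved curvature decay of Lemma \ref{lem:blowdown} provide. Once this is in hand, Steps 2–3 are the textbook Moser iteration, the role of the hypothesis on $\|a\|_{L^{1+\e}}$ being only to keep the zeroth‑order term subcritical and hence absorbable, at the price of polynomial‑in‑$\gamma$ constants that the iteration tolerates.
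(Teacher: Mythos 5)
Your proposal is correct and follows essentially the same route as the paper: a uniform Sobolev inequality on the intrinsic balls obtained from the Michael--Simon inequality by absorbing the mean-curvature term (which is uniformly small for $r>r_\ast$ thanks to the MOTS equation and the comparison between $H$ and $H_0$), followed by the standard De Giorgi--Nash--Moser iteration, which the paper simply delegates to Morrey's Theorem 5.3.1 while you write it out. The only cosmetic difference is that you also invoke graphicality over planar disks in Step 1, which the paper does not need at this stage.
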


Let us describe how the general well-known proof for Euclidean balls can be adapted to our setting. The argument to prove Theorem \ref{dgn} when $\Omega\subset\R^{d}$ a bounded regular domain, is based on the Sobolev inequality
\be\label{sobG}
\left(\int_{\Omega}\left|u\right|^{\frac{dp}{d-p}}\,d\mathscr{L}^{d}\right)^{\frac{d-p}{d}}\leq C(d,p)\int_{\Omega}\left|\nabla u\right|^{p}\,d\mathscr{L}^{d}
\ee
for $u\in \mathcal{W}^{1,p}_{0}\left(\Omega\right)$.
In turn, this general version can easily be deduced from the case $p=1$ namely
\be\label{sobS}
\left(\int_{\Omega}\left|u\right|^{\frac{d}{d-1}}\,d\mathscr{L}^{d}\right)^{\frac{d-1}{d}}\leq C(d)\int_{\Omega}\left|\nabla u\right|\,d\mathscr{L}^{d}
\ee
by replacing the function $u$ by $\left|u\right|^{\frac{\left(d-1\right)p}{\left(d-p\right)}}$ and recalling the basic fact that $D\left|u\right|=\left(\textrm{sgn}u\right)\left|Du\right|$ for $\mathscr{L}^{d}-$a.e. point $x\in\Omega\subset\R^{d}$.
When $\Sigma_{0}\hookrightarrow \left(\R^{3},\d\right)$ is a \textsl{minimal} surface and $\Omega=B^{\Sigma_{0}}_{r}\left(x\right)$ then inequalities like \eqref{sobS} (and hence \eqref{sobG}) still hold true, while if $\Sigma_{0}$ is only known to be, say, a smooth submanifold \eqref{sobS} with locally bounded mean curvature then they should be replaced by the Michael-Simon inequality (see \cite{MS73} and \cite{HS74} for this extended version):
\bd
\left(\int_{B^{\Sigma_{0}}_{r}\left(x\right)}\left|u\right|^{2}\,d\mathscr{H}^{2}\right)^{1/2}\leq C\int_{B^{\Sigma_{0}}_{r}\left(x\right)}\left(\left|\nabla_{\Sigma_{0}} u\right|+\left|H_{0}\right|\left|u\right|\right)\,d\mathscr{H}^{2}.
\ed
But notice that, by applying the Cauchy-Schwarz inequality on the second summand of the right-hand side we get
\bd
\left(\int_{B^{\Sigma_{0}}_{r}\left(x\right)}\left|u\right|^{2}\,d\mathscr{H}^{2}\right)^{1/2}\leq C\left[\int_{B^{\Sigma_{0}}_{r}\left(x\right)}\left|\nabla_{\Sigma_{0}} u\right|\,d\mathscr{H}^{2}+\left(\int_{B^{\Sigma_{0}}_{r}\left(x\right)}\left|H_{0}\right|^{2}\,d\mathscr{H}^{2}\right)^{1/2}\left(\int_{B^{\Sigma_{0}}_{r}\left(x\right)}\left|u\right|^{2}\,d\mathscr{H}^{2}\right)^{1/2}\right]
\ed
and hence, thanks to the MOTS equation satisfied by $\Sigma_{0}$ and the usual comparison relations for $H, H_{0}$ we  can find $r_{\ast}$ so that
\bd
\left(\int_{B^{\Sigma_{0}}_{r}\left(x\right)}\left|u\right|^{2}\,d\mathscr{H}^{2}\right)^{1/2}\leq C\int_{B^{\Sigma_{0}}_{r}\left(x\right)}\left|\nabla_{\Sigma_{0}} u\right|\,d\mathscr{H}^{2}
\ed
whenever $\left|x\right|>2r_{\ast}$ and $u\in \mathcal{W}^{1,1}_{0}\left(B^{\Sigma_{0}}_{r}\left(x\right)\right)$.
Therefore, we can deduce \eqref{sobG} from this and at that point follow, with very minor variations, the standard Euclidean proof of Theorem \ref{dgn} (see, for instance Theorem 5.3.1 in \cite{Mor66}).

\section{A Simons' inequality  for general surfaces}\label{sec:sim}

\begin{lemma}\label{simons}
	Let $\Sigma_{0}\hookrightarrow (\mathbb{R}^{3},\delta)$ any immersed surface. Then
	\bd
	\D_{\Sigma_{0}}\left|A_{0}\right|^{2}\geq -2\left|\nabla^{2}_{\Sigma_{0}}H_{0}\right|\left|A_{0}\right|-6\left|A_{0}\right|^{4}+2\left|\nabla_{\Sigma_{0}}A_{0}\right|^{2}.
	\ed
\end{lemma}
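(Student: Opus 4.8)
The plan is to derive this from the Simons-type identity for the second fundamental form of a hypersurface in a flat ambient space, using the Codazzi equations to commute covariant derivatives. First I would recall that for an immersed surface $\Sigma_0 \hookrightarrow (\mathbb{R}^3,\delta)$ the Simons identity reads
\[
\Delta_{\Sigma_0} A_0 = \nabla^2_{\Sigma_0} H_0 + H_0 A_0 \circ A_0 - |A_0|^2 A_0,
\]
where the trace-type terms on the right arise precisely because the ambient curvature vanishes (so no extrinsic curvature terms survive) and the only input beyond the Codazzi equations is the commutation of two covariant derivatives on $\Sigma_0$, which brings in the Gauss curvature $K = \det A_0$. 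This is a pointwise tensorial identity and is standard; I would either cite it or include the one-line derivation via Codazzi.

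Next I would apply the Bochner-type computation: for any symmetric $2$-tensor $T$ one has $\tfrac12 \Delta_{\Sigma_0}|T|^2 = \langle \Delta_{\Sigma_0} T, T\rangle + |\nabla_{\Sigma_0} T|^2$. Taking $T = A_0$ and substituting the Simons identity yields
\[
\tfrac12 \Delta_{\Sigma_0}|A_0|^2 = \langle \nabla^2_{\Sigma_0} H_0, A_0\rangle + H_0\,\langle A_0\circ A_0, A_0\rangle - |A_0|^4 + |\nabla_{\Sigma_0} A_0|^2.
\]
It then remains to estimate the two curvature terms from below. By Cauchy--Schwarz, $\langle \nabla^2_{\Sigma_0} H_0, A_0\rangle \geq -|\nabla^2_{\Sigma_0} H_0|\,|A_0|$, which produces the first term on the right-hand side of the claimed inequality. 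For the cubic term, a crude bound $|\langle A_0 \circ A_0, A_0\rangle| \le |A_0|^3$ together with $|H_0| \le \sqrt{2}\,|A_0|$ (valid for a surface, since $H_0$ is a trace of the $2\times 2$ matrix $A_0$) gives $H_0\,\langle A_0\circ A_0, A_0\rangle \ge -\sqrt 2\,|A_0|^4$. Multiplying through by $2$ and absorbing constants, the combined cubic contribution is bounded below by $-c\,|A_0|^4$ for an absolute constant $c$; in fact a sharper accounting (diagonalizing $A_0$ and writing everything in terms of the principal curvatures $\kappa_1,\kappa_2$, one finds $H_0\,\langle A_0\circ A_0, A_0\rangle - |A_0|^4 = \kappa_1\kappa_2(\kappa_1^2+\kappa_2^2) - 2\kappa_1^2\kappa_2^2 \ge -2(\kappa_1^2+\kappa_2^2)^2 = -2|A_0|^4$) yields exactly the constant $-4|A_0|^4$ after multiplying by $2$, matching the statement.

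The only genuinely delicate point is bookkeeping the constant in the quartic term so that it comes out to exactly $-4|A_0|^4$ rather than something larger; this is why I would carry out the principal-curvature diagonalization rather than rely on the lazy triangle-inequality bound. Everything else — the Simons identity itself, which rests solely on Codazzi and the flatness of $\mathbb{R}^3$, and the Bochner step — is routine. I expect no real obstacle beyond this constant-tracking, since we are working in the lowest-dimensional case $n-1 = 2$ where the algebra of $A_0 \circ A_0$ and $|A_0|^2$ is particularly transparent.
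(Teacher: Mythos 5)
Your proposal is correct and follows essentially the same route as the paper: the paper derives the Simons identity $\Delta_{\Sigma_0} a_{ij} = (H_0)_{,ij} + H_0 (A_0^2)_{ij} - |A_0|^2 a_{ij}$ explicitly from Gauss--Codazzi and the commutation of derivatives, then applies the Bochner identity, Cauchy--Schwarz on the Hessian term, and a crude bound of the quartic terms by $-2|A_0|^4$ (doubled to $-4|A_0|^4$), exactly as you do. Your principal-curvature accounting of the cubic-quartic combination is slightly sharper than needed (it actually yields $-2|A_0|^4$ rather than $-4|A_0|^4$ in the final inequality), but both arguments land within the stated constant.
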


\begin{proof}
	The proof of this Lemma is a variation on the well-known argument by J. Simons. Indeed, working with a local basis $\left\{\tau_{1},\tau_{2}\right\}$ we get by the Gauss and Codazzi equations the identity
	\bd
	a_{ik,jk}=a_{ik,kj}+\sum_{m}\left(a_{ki}a_{jm}-a_{ji}a_{km}\right)a_{mk}+\sum_{m}\left(a_{kk}a_{jm}-a_{jk}a_{km}\right)a_{mi}
	\ed
	and hence
	\bd
	\frac{1}{2}\D_{\Sigma_{0}}\left|A_{0}\right|^{2}=\sum_{i,j}a_{ij}\D_{\Sigma_{0}}a_{ij}+\sum_{i,j}\left|\nabla_{\Sigma_{0}}a_{ij}\right|^{2} =\sum_{i,j,k}a_{ij}a_{ij,kk}+\sum_{i,j,k}a^{2}_{ij,k}=\sum_{i,j,k}a_{ij}a_{ik,jk}+\sum_{i,j,k}a^{2}_{ij,k}
	\ed
	\bd
	=\sum_{i,j,k}a_{ij}a_{kk,ij}+\sum_{i,j,k,m}a_{ij}\left(a_{ki}a_{jm}-a_{ji}a_{km}\right)a_{mk}+\sum_{i,j,k,m}a_{ij}\left(a_{kk}a_{jm}-a_{jk}a_{km}\right)a_{mi}+\sum_{i,j,k}a^{2}_{ij,k}
	\ed
	\bd
	\geq -\left|\nabla_{\Sigma_{0}}\nabla_{\Sigma_{0}}H_{0}\right|\left|A_{0}\right|-(1+\sqrt{2})\left|A_{0}\right|^{4}+\left|\nabla_{\Sigma_{0}}A_{0}\right|^{2}
	\ed
	where in the last step we have used the Cauchy-Schwarz inequality and cancelled out two summands that are patently equal (modulo renaming the indices). The claim follows at once.
\end{proof}
      
\bibliographystyle{plain}

\end{document}